\documentclass[journal]{IEEEtran}
\usepackage{algorithmic}
\usepackage[ruled]{algorithm2e}
\makeatletter
\newcommand{\RemoveAlgoNumber}{\renewcommand{\fnum@algocf}{\AlCapSty{\AlCapFnt\algorithmcfname}}}
\newcommand{\RevertAlgoNumber}{\algocf@resetfnum}
\makeatother
\usepackage{array}
\usepackage[caption=false,font=normalsize,labelfont=sf,textfont=sf]{subfig}
\usepackage{textcomp}
\usepackage{stfloats}
\usepackage{url}
\usepackage{cite}
\usepackage{leftidx}
\usepackage{color}
\usepackage{cite}
\usepackage[T1]{fontenc}
\usepackage{graphicx}

\usepackage[colorlinks,
linkcolor=blue,
anchorcolor=blue,
citecolor=blue]{hyperref}
\usepackage{tikz}
\usepackage{verbatim}
\usetikzlibrary{arrows,automata}
\usepackage{amsfonts}

\usepackage{setspace}  
\usepackage{graphics}
\usepackage{picins}

\usepackage{epsfig,graphicx,amssymb}
\usepackage{epstopdf}
\usepackage{multirow}
\usepackage{booktabs}
\usepackage{tabularx}
\usepackage{nccmath}
\usepackage{float}
\usepackage[section]{placeins}
\usepackage{makecell}
\usepackage{bm}
\usepackage{amsmath}
\usepackage{amsthm}
\theoremstyle{remark}
\newtheorem{theorem}{\bf Theorem}

\newtheorem{lemma}{Lemma}

\newtheorem{remark}{Remark}

\usepackage{listings}

\allowdisplaybreaks[4]
\makeatletter
\newcommand{\biggg}{\bBigg@{3}}

\newcommand{\Biggg}{\bBigg@{3.5}}

\newcommand{\bigggg}{\bBigg@{4}}

\newcommand{\Bigggg}{\bBigg@{4.5}}

\makeatother
\graphicspath{{figures/}}
\begin{document}
	\title{Distributed Optimal Resource Allocation Search: A Dynamic Event-Triggered Algorithm}
	\author{Haoze Li, Manqing Shi, Sitian Qin and Mengxin Wang$^*$
		\thanks{Haoze Li, Manqing Shi, Sitian Qin and Mengxin Wang are with Department of Mathematics, Harbin Institute of Technology, Weihai, 264209,  China.  E-mails: lihaoze1009@163.com, 18791190526@163.com, qinsitian@hitwh.edu.cn, wmx19960301@163.com.}
		
		\thanks{This work was supported in part by the National Natural Science Foundationof China (62176073,12271127), Taishan Scholars of ShandongProvince (tsqn202211090) and Natural Science Foundation of Shandong Province(ZR2024MF080). (\emph{Corresponding author: Mengxin Wang}.)}}
	
	\maketitle
\begin{abstract}
This paper investigates an equality-coupled distributed resource allocation problem with smooth general convex local objective functions. A discrete-time residual-aware dynamic event-triggered algorithm is proposed over time-varying switching undirected graphs. Unlike existing event-triggered resource allocation algorithms that rely on strong convexity, the proposed method establishes convergence for general convex costs without using strong monotonicity or contraction arguments. The key idea is to co-design a resource-allocation search recursion with a dynamic triggering rule that incorporates both local gradient-estimation errors and local gradient-disagreement residuals. The resulting triggering mechanism reduces unnecessary communication and generates a summable error bound, which is embedded into a Mirror-EXTRA-type Lyapunov analysis. Under suitable step-size conditions, the proposed algorithm is proved to converge to an optimal solution.  Further, when the local objective functions are strongly convex, a linear convergence result is established. Numerical simulations and comparative tests with related event-triggered methods verify the effectiveness and communication efficiency of the proposed algorithm.
	
	\begin{IEEEkeywords}
		Discrete time algorithm, dynamic event-triggered mechanism, resource allocation, multi-agent systems.
	\end{IEEEkeywords}
\end{abstract}
	
\section{Introduction}\label{Sec1}
In socio-economic development, the relative scarcity of resources compared with ever-growing demand naturally gives rise to resource allocation problems. Resource allocation refers to allocating given resources under a preset cost function and finding the optimal allocation scheme that minimizes the total cost. Resource allocation problems arise in many fields, such as smart grids \cite{10791871}, economic dispatch \cite{10971901}, 5G virtual networks \cite{10149180}.

Multi-agent systems have been widely applied to resource allocation in engineering control, including model predictive control \cite{le2017collective} and sensor networks \cite{jin2016distributed}, owing to their strengths in large-scale parallel processing, fast convergence, real-time computation, and hardware implementation. Traditional algorithms often rely on synchronized computation and communication \cite{9115815,10791871,10971901,10149180,li2024smoothing}, but their efficiency deteriorates as problem dimensionality and constraints grow, primarily due to excessive communication overhead \cite{cicirelli2020analysis}. Since communication costs typically outweigh computational expenses, event-triggered mechanisms have attracted significant attention \cite{8715380,Liu2024,10645219,10713901,10752430,9928208}, as they allow asynchronous updates, alleviate network congestion, and substantially improve communication efficiency in multi-agent systems.

	Event-triggered mechanisms have been widely used to reduce communication, energy consumption, and computational load in multi-agent systems. Static event-triggered algorithms with decaying thresholds have been studied in \cite{Wang2020,Dai2020,10645219}, while dynamic event-triggered algorithms \cite{Zhang2021,Chen2024,GUO2022110390} adapt triggering thresholds through internal variables.In distributed resource allocation, many existing event-triggered algorithms require strong convexity for convergence or linear convergence. This assumption may be restrictive, since local cost functions are often merely convex in practical applications. Thus, dynamic event-triggered resource-allocation algorithms for general convex objectives remain important.

	Event-triggered distributed convex optimization has also been studied in broader settings. In \cite{9115622}, an input-feedforward-passivity framework is developed for common-decision-variable distributed optimization over uniformly jointly strongly connected balanced digraphs, where strong convexity is essential. In \cite{HUANG2024111877}, a constant-step-size event-triggered primal-dual algorithm is proposed for general convex optimization with local set constraints and coupled equality/inequality constraints over a fixed undirected graph. In contrast to these work, this paper focuses on equality-coupled resource allocation with local allocation variables, and develops a discrete-time resource-allocation search recursion with a dynamic gradient-estimation-error-based triggering mechanism under switching undirected graphs.

	Furthermore, discrete-time algorithms are more suitable for digital implementation than continuous-time algorithms \cite{cui2023resilient}. Existing discrete-time event-triggered algorithms often require strong convexity to obtain linear convergence \cite{9928208,10713901,10645219,10473140}; for instance, \cite{Li2024} develops a static event-triggered algorithm over fixed undirected graphs under strong convexity. Switching topologies are also important in networks with link failures, unstable transmissions, or DoS attacks, which can be interpreted as DoS-induced switching topologies \cite{10938121}.

Motivated by the above observations, this paper develops a discrete-time dynamic event-triggered resource-allocation algorithm under switching undirected graphs. Compared with existing event-triggered distributed optimization methods, the novelty of this paper does not lie in considering convexity or switching topology separately. Instead, it lies in the co-design of a resource-allocation search recursion, a dynamic gradient-estimation-error-based triggering mechanism, and a convergence analysis under smooth general convexity and switching communication graphs. The main contributions are summarized as follows.
	\begin{enumerate}
\item A discrete-time dynamic event-triggered algorithm is developed for equality-coupled distributed resource allocation with smooth general convex local objective functions. Different from event-triggered resource-allocation algorithms that rely on strong convexity for convergence or linear convergence \cite{9928208,10713901,10645219,Li2024}, the proposed method establishes convergence without using strong monotonicity or contraction arguments.
	
	\item A residual-aware dynamic triggering mechanism is designed. Compared with the static decaying-threshold schemes in \cite{wang2019distributed,Li2024} and dynamic event-triggered mechanisms that only adjust the threshold through an internal variable \cite{Zhang2021,Chen2024,GUO2022110390}, the proposed rule incorporates both the local gradient-estimation error and the local gradient-disagreement residual \(r_i(k)\). The additional term	adaptively reflects the local variation of the auxiliary variable and the disagreement of transmitted gradient information, thereby reducing redundant communication while preserving the summable triggering-error bound required for convergence.
	
	\item A switching-topology-compatible convergence analysis is provided. By combining a Mirror-EXTRA-type resource-allocation recursion with the summable error bound generated by the proposed residual-aware triggering rule, the analysis handles smooth general convexity, switching undirected graphs, and event-triggered gradient-estimation errors in a unified framework. This differs from the input-feedforward-passivity approach for common-decision-variable optimization in \cite{9115622} and the fixed-graph primal-dual event-triggered framework in \cite{HUANG2024111877}.
\end{enumerate}

	$Notations$: Denote by $\mathbb{R}$, $\mathbb{R}^n$, $\mathbb{R}^{n\times m}$, and $\mathbb{N}$ the sets of real numbers, real $n$-dimensional column vectors, real $n\times m$ matrices, and positive integers, respectively. The Kronecker product is denoted by $\otimes$. For a matrix $A$, $A^T$ denotes its transpose. Let $I_n$ be the $n\times n$ identity matrix, $\mathrm{diag}\{a_1,\ldots,a_n\}$ a diagonal matrix, and $\mathbf{1}_n$ the all-one vector in $\mathbb{R}^n$. The symbol $\|\cdot\|$ denotes the Euclidean norm.

\section{PRELIMINARIES}\label{Sec3}

Consider a multi-agent system consisting of \( n \) agents. For each agent \( i \), \( X_i \in \mathbb{R}^m \) denotes its local resource allocation, and \( C_i \in \mathbb{R}^m \) represents its local resource demand. The local objective function \( g_i:\mathbb{R}^m \rightarrow \mathbb{R} \) characterizes the individual cost incurred by agent \( i \) based on its own allocation \( X_i \). Each agent \( i \) has access only to its local information, including \( X_i \), \( C_i \), and \( g_i \), and seeks to collaboratively solve the following resource allocation problem through information exchange with its neighboring agents
\begin{equation}\label{equ1}
	\begin{split}
		&\min ~~G(X)=\sum\limits_{i=1}^{n}g_i(X_i)\\
		&\text{s.t.} \quad \sum\limits_{i=1}^{n}X_i=\sum\limits_{i=1}^{n}C_i,
	\end{split}
\end{equation}
where $X=\text{col}(X_1,\cdots,X_n)\in \mathbb{R}^{nm}$. For agent $i$, $g_i$ is a convex, continuously differentiable function and satisfies the $l_i$-Lipschitz condition, and the constraint set $\Omega=\{X\in\mathbb{R}^{nm} | \sum_{i=1}^{n}X_i=\sum_{i=1}^{n}C_i\}$ is a nonempty set.
	
The communication topology of the agents is described by an undirected connected graph $\mathbb{G}=(\mathbb{V},\mathbb{E})$, where $\mathbb{V}=\{v_1,\cdots,v_n\}$ is the node set and $\mathbb{E}\subseteq \mathbb{V}\times\mathbb{V}$ is the edge set. If agent $i$ can transmit information to agent $j$, then $(i,j)\in\mathbb{E}$, and $\mathcal{N}_i$ denotes the neighbor set of agent $i$. Consider a finite set of undirected connected graphs $\mathcal{G}=\{\mathbb{G}_1,\cdots,\mathbb{G}_r\}$ with switching signal $\delta(k):\mathbb{N}\to\{1,\cdots,r\}$, which activates one graph at each time. When $\delta(k)=p$, the active graph is $\mathbb{G}_p=(\mathbb{V}_p,\mathbb{E}_p)$ with adjacency matrix $A_p=[a_{ij}^p]$ and Laplacian $L_p$. The spectral decomposition of $L_p$ is
\[
L_p = Q_p \operatorname{diag}\{0,\lambda_2^p,\cdots,\lambda_n^p\} Q_p^T,
\]
where $Q_p$ is orthogonal and $\lambda_i^p$ are the eigenvalues. Define
$
\sqrt{L_p}=Q_p \operatorname{diag}\{0,\sqrt{\lambda_2^p},\cdots,\sqrt{\lambda_n^p}\} Q_p^T$, $ 
\sqrt{\bar L_p}=\sqrt{L_p}\otimes I_m$, 
and denote
$
\lambda_x=\min_{p,i\ge 2}\lambda_i^p$, $ 
\lambda_d=\max_{p,i\ge 2}\lambda_i^p.
$

	\begin{lemma}\cite{Li2024}\label{lem1}
		For the given $\{V(k)\},\{\alpha(k)\}$ and $\{\beta(k)\}$, where $\{\alpha(k)\},\{\beta(k)\}$ are summable sequences, $\{\beta(k)\}$ is a nonincreasing sequence, and if $(\frac{2\beta(0)-\beta(k)}{2\beta(0)})V(k+1) \leqslant V(k)+\alpha(k)$, then $\{V(k)\}$ is bounded.
	\end{lemma}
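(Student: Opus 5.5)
The plan is to rewrite the hypothesis as a perturbed multiplicative recursion $V(k+1)\le (1+b_k)\bigl(V(k)+\alpha(k)\bigr)$ with $\sum_k b_k<\infty$, and then bound the resulting products by an exponential. Throughout I will make explicit the standing conventions that are implicit in how Lemma~\ref{lem1} is used for Lyapunov functions: $V(k)\ge 0$, $\beta(k)\ge 0$, and $\beta(0)>0$. If $\beta(0)=0$, then monotonicity and nonnegativity force $\beta\equiv 0$ and the coefficient is read as $1$. I will not assume a sign for $\alpha(k)$ and will work with $|\alpha(k)|$ instead.

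\textbf{Step 1 (coefficient bounds).} Since $\{\beta(k)\}$ is nonincreasing and nonnegative, $0\le \beta(k)\le\beta(0)$. Set $x_k=\beta(k)/(2\beta(0))\in[0,\tfrac12]$, so the coefficient equals $c_k=1-x_k\in[\tfrac12,1]$, and in particular $c_k>0$. Dividing the hypothesis by $c_k$ gives
\[
V(k+1)\le \frac{1}{1-x_k}\bigl(V(k)+|\alpha(k)|\bigr).
\]
For $x\in[0,\tfrac12]$ we have the elementary inequality $\frac{1}{1-x}\le 1+2x$, since it is equivalent to $x(1-2x)\ge 0$. Hence
\[
V(k+1)\le (1+b_k)\bigl(V(k)+|\alpha(k)|\bigr), \qquad b_k=\beta(k)/\beta(0).
\]

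\textbf{Step 2 (unrolling).} By induction on $k$,
\[
V(k+1)\le \prod_{j=0}^{k}(1+b_j)\,V(0)+\sum_{j=0}^{k}\prod_{i=j}^{k}(1+b_i)\,|\alpha(j)|.
\]
Using $1+b\le e^{b}$, every partial product is bounded by
\[
P:=\exp\Bigl(\tfrac{1}{\beta(0)}\sum_{k\ge0}\beta(k)\Bigr)<\infty,
\]
which is finite by the summability of $\{\beta(k)\}$. It follows that
\[
0\le V(k+1)\le P\Bigl(V(0)+\sum_{j\ge0}|\alpha(j)|\Bigr)
\]
for all $k$, and the right-hand side is finite because $\{\alpha(k)\}$ is summable (absolutely, or nonnegative, as in the intended applications). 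This bound gives boundedness of $\{V(k)\}$.

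\textbf{Main obstacle.} The computation itself is routine. The only delicate point is making the implicit hypotheses precise. We need $c_k>0$, which comes from $\beta(k)\le\beta(0)$ via monotonicity, so that dividing by $c_k$ preserves the inequality. We also need a lower bound on $V$ for two-sided boundedness, which comes from $V\ge0$; without it one only obtains an upper bound. I would state these conventions at the start of the proof.
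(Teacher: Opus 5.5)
Your argument is correct and is essentially the route the paper relies on: the lemma is imported from \cite{Li2024}, and the estimate invoked in the proof of Theorem~\ref{the1}, $V(k+1)\le\bigl(V^{k_1}+\sum_j\xi^j\bigr)\exp\bigl\{\sum_j\alpha^j/\alpha^0\bigr\}$, is exactly the bound you obtain from $1/(1-x)\le 1+2x$ and $1+b\le e^{b}$. As a minor simplification, you need not assume $\beta(k)\ge 0$, since a nonincreasing summable sequence decreases to $0$ and is therefore nonnegative.
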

\section{Main Results}\label{Sec4}
To eliminate the dependence of the event-triggered mechanism on the strong convexity of the objective function and further accommodate the need for switching topologies, the discrete-time algorithm based on a dynamic event-triggered mechanism is proposed. 

Inspired by the Mirror-EXTRA algorithm in \cite{nedic2018improved} and \cite{Li2024}, this paper proposes the following dynamic event-triggered discrete-time algorithm
	\begin{equation}\label{equ2}
		\begin{split}
			&Z_i(k)=Z_i(k-1)+\sum_{j\in \mathcal{N}_i}a_{ij}^{\delta(k)}\big(\nabla g_i(\hat{X}_i(k))-\nabla g_i(\hat{X}_j(k))\big),\\
			&X_i(k+1)=C_i-2hZ_i(k)+hZ_i(k-1),
		\end{split}
	\end{equation}
	where $X_i(k)$ is the estimation of the optimal solution to the resource allocation problem \eqref{equ1} by agent $i$, $Z_i(k)$ is the auxiliary variable. $h$ is the iteration step size, and $\delta(k)$ is the communication topology switching signal. $\hat{X}_i(k)=X_i(k_i^t)$ represents the most recently updated local variable of agent $i$ under the event-triggered mechanism. $k_i^t\ (t\in\mathbb{N})$ is the trigger time. Define the local gradient-disagreement residual of agent \(i\) as
	$
	r_i(k)=
	\sum_{j\in\mathcal N_i}
	a_{ij}^{\delta(k)}
	\left(
	\nabla g_i(\hat X_i(k))
	-
	\nabla g_i(\hat X_j(k))
	\right).
	$
	Without loss of generality, assuming $k_i^0=0$, the trigger time sequence is defined as follows 
	\begin{equation}\label{equ3}
		k_i^{t+1}=\min\limits_k\{k>k_i^t \ \big|\  \|e_i(k)\|\ge \theta_i\eta_i(k)+c_i\beta_i(k)+\varphi_i(k)\},
	\end{equation}
	where residual-aware term $\varphi_i(k)=\frac{\rho_i\beta_i(k)}{1+\|r_i(k)\|}$,  $\beta_i(k)=\beta_i^k$,  $\beta_i, \theta_i \in (0,1)$, \(\rho_i>0\) and $c_i > 1$. $\eta_i$ is a dynamic threshold variable. $e_i(k) = \nabla g_i(k)-\nabla \hat{g}_i(k)$ is the estimation error.
	
To simplify the expression, define the gradient information as $\nabla g_i(k) = \nabla g_i(X_i(k))$ and $\nabla \hat{g}_i(k) = \nabla g_i(\hat{X}_i(k))$. For agent $i$, the value of $\nabla \hat{g}_i(k)$ is updated to $\nabla g_i(k)$ only when the trigger time $k$.  The definition of dynamic threshold variable is as follows
	\begin{equation}\label{equ4}
		\eta_i(k+1) = (1 - \tau_i)\eta_i(k) + c_i\beta_i(k)+\varphi_i(k) - \|e_i(k)\|,
	\end{equation}
	where $0<\tau_i<1 - \theta_i  $. 
	
	\begin{remark}
Compared with the static scheme
	\begin{equation}\label{equ5}
		k_i^{t+1}
		=
		\min_k
		\left\{
		k>k_i^t
		\mid
		\|e_i(k)\|>c_i\beta_i^k
		\right\}
	\end{equation}
	in \cite{wang2019distributed,Li2024}, the proposed triggering rule
	\eqref{equ3}--\eqref{equ4} has a dynamic-threshold and residual-aware
	design. The variable \(\eta_i(k)\) adaptively adjusts the triggering
	threshold according to the local gradient-estimation error, instead of
	using only a prescribed decaying threshold. Meanwhile, the term
	\(\rho_i\beta_i(k)(1+\|r_i(k)\|)^{-1}\) incorporates the local
	gradient-disagreement residual into the triggering condition. Since
	\(r_i(k)=Z_i(k)-Z_i(k-1)\), it reflects the local variation of the
	auxiliary variable and the disagreement of the transmitted gradient
	information. A large \(\|r_i(k)\|\) keeps the rule sensitive to significant disagreement, while a small \(\|r_i(k)\|\) enlarges the threshold and reduces redundant communication. Consequently, the proposed mechanism exploits
	both local estimation-error information and local gradient-disagreement
	information, which improves communication efficiency while preserving the
	summable error bound needed for convergence.
	\end{remark}
	The detailed iterative mode of the dynamic event-triggered discrete-time algorithm \eqref{equ2} is shown in TABLE \ref{tab:dy}.
\begin{table}[ht]
	\centering
		\caption{Dynamic Event-Triggered Algorithm}
	\label{tab:dy}
	\begin{tabular}{l}
		\hline
		\textbf{Step} \quad \textbf{Description} \\
		\hline
		\textbf{Input}  Each agent \( i \) initializes with: \\
		 ~~~~~~~~\( X_i(0) \in \mathbb{R}^m,\  \hat{X}_i(0) = X_i(0), \  Z_i(-1) = 0_m. \) \\
		 ~~~~~~~~$\beta_i$, $\theta_i\in (0,1)$, $c_i>1$, $0<\tau_i<1 - \theta_i $.\\
		 ~~~~~~~~$\eta_i(0)>0$, $e_i(0) = \nabla g(0)-\nabla \hat{g}(0)$.\\

		\textbf{For} \( k = 0, 1, \cdots \) \textbf{do:}  Repeat for each step \( k \) \\

		\textbf{~For} \( i = 1, 2, \cdots, n \) \textbf{do:}  Repeat for each agent \( i \) \\

		\textbf{~~~Update}  \\ $$~~~~\( Z_i(k) \leftarrow Z_i(k-1) + \sum_{j \in \mathcal{N}_i} a_{ij}^{\delta(k)}\left( \nabla \hat{g}_i(k) - \nabla \hat{g}_j(k) \right) \)$$ \\

		\textbf{~~~Update} \\$$~~~~\( X_i(k+1) \leftarrow C_i - 2h Z_i(k) + h Z_i(k-1) \)$$ \\

		\textbf{~~~Update} \( e_i(k+1) \leftarrow \nabla g(k+1)-\nabla \hat{g}(k+1) \) \\
		
		\textbf{~~~Update} \(r_i(k+1)\) and \(\phi_i(k+1)=\frac{\rho_i\beta_i(k+1)}{1+\|r_i(k+1)\|}\).\\
		
		\textbf{~~~Update} \(\eta_i(k+1)\) based on dynamic threshold \eqref{equ4} \\

		\textbf{~~~If} \(\|e_i(k+1)\| \geq \theta_i\eta_i(k+1)+c_i\beta_i(k+1)+\phi_i(k+1)\) \\
		~~~~~~Let \( \nabla \hat{g}_i(k+1) = \nabla g_i(k+1) \), transmit \( \nabla \hat{g}_i(k+1) \) \\

		\textbf{~~~Else:}  Do not transmit and let \( \nabla \hat{g}_i(k+1) = \nabla g_i(k) \) \\

		\textbf{~End For}  End loop for each agent \( i \) \\

		\textbf{End For}  End loop for each step \( k \) \\
		\hline
	\end{tabular}
\end{table}

\begin{remark}
	(Motivation and novelty) Distributed resource allocation with general convex but non-strongly convex objectives is common in applications such as smart grids, distributed clustering, and federated learning \cite{YI2016259,7915716,8070456}, but remains challenging for dynamic event-triggered communication since many existing algorithms rely on strong convexity. This paper develops a discrete-time dynamic event-triggered algorithm under switching topologies. By combining local-gradient-estimation-based triggering, dynamic error bounding, and a tailored Lyapunov analysis, convergence is established without strong monotonicity or contraction-based arguments.
\end{remark}
	\begin{lemma}\label{lem3}
		For the error function $e_{i}(k) = \nabla g_i(k) -\nabla\hat{g}_i(k)$, there existing sequence $\{\alpha(k)\}$ satisfies $\|e_{i}(k)\| \leqslant \alpha(k)$, where $\{\alpha(k)\}$ is nonincreasing and summable sequence.
	\end{lemma}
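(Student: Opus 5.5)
Between two consecutive trigger times the event condition \eqref{equ3} is violated, so for every $k$ we have
\[
\|e_i(k)\|\le \theta_i\eta_i(k)+c_i\beta_i(k)+\varphi_i(k)\le \theta_i\eta_i(k)+(c_i+\rho_i)\beta_i^k .
\]
At a trigger instant $e_i(k)=0$, so the inequality holds trivially; we used $\varphi_i(k)\le\rho_i\beta_i(k)$ because $1+\|r_i(k)\|\ge1$. The whole problem therefore reduces to bounding $\eta_i(k)$ by a geometrically decaying sequence. The first step is to check that $\eta_i(k)\ge0$ for all $k$. By \eqref{equ4} and the bound above,
\[
\eta_i(k+1)\ge(1-\tau_i)\eta_i(k)-\theta_i\eta_i(k)=(1-\tau_i-\theta_i)\eta_i(k).
\]
Since $0<\tau_i<1-\theta_i$ and $\eta_i(0)>0$, induction gives $\eta_i(k)>0$.

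The second step is an upper bound. Dropping $-\|e_i(k)\|\le 0$ in \eqref{equ4} gives
\[
\eta_i(k+1)\le(1-\tau_i)\eta_i(k)+(c_i+\rho_i)\beta_i^k .
\]
Unrolling this recursion,
\[
\eta_i(k)\le(1-\tau_i)^k\eta_i(0)+(c_i+\rho_i)\sum_{s=0}^{k-1}(1-\tau_i)^{k-1-s}\beta_i^{s}.
\]
Set $\mu_i=\max\{1-\tau_i,\beta_i\}<1$. Each term of the sum is at most $\mu_i^{k-1}$, so the sum is at most $k\mu_i^{k-1}$. Pick any $\gamma_i\in(\mu_i,1)$. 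Then $k\mu_i^{k-1}\le M_i\gamma_i^k$ for a constant $M_i$, since $k(\mu_i/\gamma_i)^k$ is bounded. Alternatively, if $1-\tau_i\ne\beta_i$, the geometric-sum formula gives a bound of the form $\frac{(1-\tau_i)^k-\beta_i^k}{(1-\tau_i)-\beta_i}$ directly. Either way $\eta_i(k)\le K_i\gamma_i^k$.

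Combining the two steps gives $\|e_i(k)\|\le(\theta_iK_i+c_i+\rho_i)\gamma_i^k$. Define
\[
\alpha(k)=\Big(\max_i(\theta_iK_i+c_i+\rho_i)\Big)\,\bar\gamma^{\,k},\qquad \bar\gamma=\max_i\gamma_i<1 .
\]
This sequence is positive, nonincreasing, and geometrically summable, and it bounds $\|e_i(k)\|$ for all $i$ and $k$.

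The main point needing care is the convolution sum when $1-\tau_i$ and $\beta_i$ are comparable or equal. This is why the plan passes to a slightly larger rate $\gamma_i$, which absorbs the polynomial factor $k$. A second detail is the consistency of the triggering bound at trigger times and at $k=0$: at $k=0$ we have $\hat X_i(0)=X_i(0)$, so $e_i(0)=0$ and the first inequality still holds. No property of $g_i$, of the graphs, or of $r_i(k)$ is needed beyond $\|r_i(k)\|\ge0$, so the bound holds regardless of the switching signal.
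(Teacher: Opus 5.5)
Your argument is correct and shares the paper's skeleton. The violated triggering condition plus $\varphi_i(k)\le\rho_i\beta_i^k$ gives $\|e_i(k)\|\le\theta_i\eta_i(k)+(c_i+\rho_i)\beta_i^k$; dropping $-\|e_i(k)\|$ in the $\eta_i$-recursion and unrolling then bounds $\eta_i$.

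The genuine difference is how you handle the convolution sum $\sum_{s}(1-\tau_i)^{k-1-s}\beta_i^{s}$. The paper imposes the extra restriction $\beta_i<1-\tau_i$, which appears neither in the lemma nor in the algorithm's parameter list. This lets it factor out $(1-\tau_i)^k$ and sum a geometric series, giving the explicit rate $(1-\tau_i)^k$ and the explicit constant reused in Theorem~\ref{the2}. You instead bound the sum by $k\mu_i^{k-1}$ with $\mu_i=\max\{1-\tau_i,\beta_i\}$ and absorb the factor $k$ into any rate $\gamma_i\in(\mu_i,1)$. Your bound therefore needs only the stated hypotheses, including the case $\beta_i\ge 1-\tau_i$. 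The price is a rate slightly above $\max\{1-\tau_i,\beta_i\}$ and a less explicit constant.

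You also finish more carefully than the paper. Your $\alpha(k)$ actually bounds $\|e_i(k)\|$ and is common to all agents, which is what Theorem~\ref{the1} needs when it writes $\|E(k)\|\le\sqrt{n}\,\alpha(k)$. The paper's $\alpha(k)$, as written, depends on the agent and bounds $\eta_i(k+1)$, omitting the factor $\theta_i$ and the additive $(c_i+\rho_i)\beta_i^k$ term.

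Your positivity step for $\eta_i$ is not needed. The upper bound on $\eta_i$ holds whatever its sign, and $\|e_i(k)\|=0\le\alpha(k)$ at trigger instants. As ordered it is also mildly circular: the trigger-instant case of your first inequality already assumes a nonnegative threshold. A joint induction fixes this.
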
	
	\begin{proof}
		From the triggering rule \eqref{equ3},  it follows that  $\|e_i(k)\|<\theta_i\eta_i(k)+(c_i+\rho_i)\beta_i(k)$.  $\{c_i\beta_i(k)\}$ is clearly a nonincreasing and summable sequence. From \eqref{equ4} and the definition of $\varphi_i$, we have the recurrence relation  $\eta_{i}(k+1) \leqslant(1-\tau_i) \eta_{i}(k)+(c_{i}+\rho_i) \beta_i(k).$ Then, one gets
\begin{align*}
				\eta_{i}(1) &\leq(1-\tau_i) \eta_{i}(0)+(c_{i}+\rho_i) \beta^{0}_i, \\
				\eta_{i}(2) &\leq(1-\tau_i)^{2} \eta_i(0)+(c_{i}+\rho_i) \beta_i^{0}(1-\tau_i)+c\beta_i^{1}, \\
				\cdots \\
				\eta_{i}(k+1) &\leq(1-\tau_i)^{k+1} \eta_{i}(0)+\sum_{j=0}^{k} c \beta_i^{j}(1-\tau_i)^{k-j}\\
				&=(1-\tau_i)^{k}\Big( (1-\tau_i)\eta_{i}(0)+\sum_{j=0}^{k} c \beta_i^{j}(1-\tau_i)^{-j}\Big).
\end{align*}where $c=$max$_{i\in\mathbb{V}}$$\{c_{i}+\rho_i\}$. Taking appropriate parameters such that $\beta_i<1-\tau_i$, one has
			\begin{align*}
				\eta_{i}(k+1) &\le(1-\tau_i)^{k}\Big( (1-\tau_i)\eta_{i}(0)+c\frac{1-\tau_i}{1-\tau_i-\beta_i}\Big),
			\end{align*}
		 where $\alpha(k)=(1-\tau_i)^{k}\Big((1-\tau_i)\eta_{i}(0)+c\frac{1-\tau_i}{1-\tau_i-\beta_i}\Big)$, then $\{\alpha(k)\}$ obviously is nonincreasing and summable. Although the residual-aware term is upper-bounded by a summable sequence in the convergence proof, its role is not merely technical. It adaptively modulates the triggering threshold according to the local variation of the resource-allocation search direction. This mechanism preserves the summable-error property essential  for the Mirror-EXTRA-type analysis while avoiding unnecessary transmissions when the local gradient-disagreement residual is small.
	\end{proof}
	Define $\nabla\hat{g}(k)=\operatorname{col}(\nabla \hat{g}_{1}(k), \cdots, \nabla \hat{g}_{n}(k))$, $Z(k)=\operatorname{col}(Z_{1}(k), \cdots, Z_{n}(k))$, $X(k)=\operatorname{col}(X_{1}(k), \cdots, X_{n}(k))$, $C=\operatorname{col}(C_1,\cdots,C_n)$.
	Then,  the	stacked form of event-triggered algorithm \eqref{equ2} is as follows
	\begin{equation}\label{equ6}
		\left\{
		\begin{aligned}
			&Z(k)=Z(k-1)+\bar{L}^{\delta(k)}\nabla\hat{g}(k),\\
			&X(k+1)=C-2hZ(k)+hZ(k-1).
		\end{aligned}
		\right.
	\end{equation}
		Moreover, \(Z(-1)=0\) and each increment 
		\(\bar L_{\delta(k)}\hat{\nabla}g(k)\) belongs to this common disagreement subspace. Hence, \(Z(k)\) remains in this subspace for all \(k\), and for each active graph there exists a vector \(W(k)\) satisfying $
		Z(k)=\sqrt{\bar L_{\delta(k)}}W(k)$.
		
	In order to discuss the convergence of algorithm \eqref{equ6}, $W_k$ is introduced so that $Z(k)=\sqrt{\bar{L}^{\delta(k)}}W(k)$, since $Z^{-1}=0$ in initialization, then algorithm \eqref{equ3} is rewritten as
	\begin{equation}\label{equ7}
		\left\{
		\begin{aligned}
			&X(k+1)=C-h\sqrt{\bar{L}^{\delta(k)}}(\nabla\hat{g}(k+1)-\nabla\hat{g}(k)),\\
			&W(k+1)=W(k)+\sqrt{\bar{L}^{\delta(k)}}\nabla\hat{g}(k+1).
		\end{aligned}
		\right.
	\end{equation}
	\begin{theorem}\label{the1}
		If the appropriate step size $h<\frac{1}{4\lambda_dl}$ is selected, where $\lambda_d$ is defined in the preliminaries of graph theory, and $l=\text{min}_{i\in N} \{l_i\}$ with $l_i$ being the Lipschitz parameter of $g_i$, then the sequence $\{X_i(k)\}$ generated by algorithm \eqref{equ2} converges to an optimal solution of the resource allocation problem \eqref{equ1}, and $\{Z(k)\}$ converges to a fixed point.
	\end{theorem}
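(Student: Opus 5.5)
We follow the Mirror-EXTRA template of \cite{nedic2018improved} and \cite{Li2024}, treating the triggering error as a summable perturbation. First we record the structural facts implied by \eqref{equ6}. Since $\mathbf{1}_n^T\otimes I_m$ annihilates every $\bar L^{\delta(k)}$ and $Z(-1)=0$, we have $(\mathbf{1}_n^T\otimes I_m)Z(k)=0$ for all $k$. Hence $\sum_i X_i(k+1)=\sum_i C_i$, so every iterate after the first is feasible. Next we characterize optimality via the KKT conditions of \eqref{equ1}: $X^*\in\Omega$ is optimal iff $\nabla g_i(X_i^*)=\mu^*$ for all $i$. Since every graph in $\mathcal G$ is connected, this is equivalent to $\bar L_p\nabla G(X^*)=0$ for any $p$. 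We fix such an optimal pair $(X^*,W^*)$, where $W^*$ is chosen so that $\sqrt{\bar L_p}W^*$ reproduces the fixed-point relation $X^*=C-\sqrt{\bar L_p}\,\cdot$(fixed-point term) of \eqref{equ7}. We then rewrite \eqref{equ7} with $\nabla\hat g(k)=\nabla G(X(k))-e(k)$, where $e(k)=\operatorname{col}(e_1(k),\dots,e_n(k))$. By Lemma~\ref{lem3}, $\|e(k)\|\le\sqrt n\,\alpha(k)$ with $\{\alpha(k)\}$ summable and nonincreasing.

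The core step is a one-step Lyapunov inequality for
\[
V(k)=\tfrac{1}{h}\|X(k)-X^*\|^2+h\|W(k)-W^*\|^2 ,
\]
possibly augmented with a term $\|\nabla G(X(k))-\nabla G(X^*)\|^2$ to absorb the difference-of-gradients appearing in the $X$-update. We expand $V(k+1)-V(k)$ using \eqref{equ7}. Convexity with $l$-smoothness gives co-coercivity,
\[
\langle \nabla G(X)-\nabla G(X^*),X-X^*\rangle\ge \tfrac1{l}\|\nabla G(X)-\nabla G(X^*)\|^2 ,
\]
and this is the only place curvature enters, so no strong monotonicity is needed. The cross terms from $\sqrt{\bar L}$ are bounded by $\lambda_d$. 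The step-size condition $h<\frac{1}{4\lambda_d l}$ is then used to make the coefficient of $\|\nabla G(X(k+1))-\nabla G(X^*)\|^2$ and of $\|X(k+1)-X(k)\|^2$ strictly negative. The error terms enter linearly, as $\langle e(k),\cdot\rangle$, and quadratically. We bound the linear ones by $\alpha(k)\sqrt{V(k+1)}\le \alpha(k)(1+V(k+1))$. The target inequality has the form
\[
\Big(1-c\,\alpha(k)\Big)V(k+1)+\gamma\,\|\bar L^{\delta(k)}\nabla G(X(k+1))\|^2\le V(k)+c'\alpha(k).
\]
Matching this to Lemma~\ref{lem1} by taking $\beta(k)$ proportional to $\alpha(k)$ gives boundedness of $V(k)$, and therefore of $X(k)$ and $W(k)$.

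With boundedness in hand, the linear error terms become $O(\alpha(k))$, which is summable. A Robbins--Siegmund-type argument, or simply summing the inequality, then yields $\sum_k\|\bar L^{\delta(k)}\nabla G(X(k))\|^2<\infty$ and $\sum_k\|X(k+1)-X(k)\|^2<\infty$, and shows that $V(k)$ converges for every optimal pair. Because each $L_p$ has uniform smallest nonzero eigenvalue $\lambda_x>0$, the gradient disagreement $\nabla G(X(k))-\mathbf 1_n\otimes\bar\mu(k)$ tends to $0$. Any cluster point $\bar X$ is feasible and has equal local gradients, so it is optimal. Re-running the argument with $X^*=\bar X$ and using the convergence of $V(k)$ gives convergence of the whole sequence (Opial-type argument). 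Convergence of $Z(k)$ follows from $Z(k)=\frac{1}{h}\bigl(\ldots\bigr)$, which we obtain by inverting the $X$-update on the disagreement subspace, together with convergence of $X(k)$ and $e(k)\to0$.

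The main obstacle is the switching topology. The representation $Z(k)=\sqrt{\bar L_{\delta(k)}}W(k)$ changes matrices between $k$ and $k+1$, so the telescoping in $\|W(k)-W^*\|^2$ is not automatic. We would first try to work directly in the $Z$ coordinates restricted to the common disagreement subspace $\{\mathbf 1^T\otimes I_m\}^\perp$. On this subspace all $L_p$ are uniformly positive definite with spectrum in $[\lambda_x,\lambda_d]$, and there we would use the weighted norm $\|Z\|^2_{L_{\delta(k)}^{\dagger}}$. The mismatch between consecutive weights would be controlled by the factor $\lambda_d/\lambda_x$, or absorbed by requiring a common Lyapunov weight. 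If that fails, we would fall back on assuming the dwell time between switches is long enough that the mismatch terms are summable.
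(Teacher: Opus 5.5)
There is a genuine gap, in two places.

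\textbf{The one-step inequality.} This is the heart of the theorem, and it is asserted rather than derived. The Euclidean term $\frac1h\|X(k)-X^*\|^2$ you propose cannot be made to telescope. For a fixed graph, $X(k+1)-X(k)=-h\bar{L}(2\nabla\hat g(k)-\nabla\hat g(k-1))$, so its increment contains $\langle\nabla\hat g(k)-\nabla g(X^*),\bar{L}(X(k)-X^*)\rangle$. Here the Laplacian sits between gradient and displacement, and co-coercivity says nothing about such terms. The metric in which the recursion is Fej\'er-like is $\bar{L}^{\dagger}$, and $h\|W(k)-W^*\|^2=\frac1h\|C-hZ(k)-X^*\|^2_{\bar{L}^{\dagger}}$ is already that term.

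The paper instead uses $V(k)=h\|W(k)-W^*\|^2+\sigma\|\nabla g(k)-\nabla g(X^*)\|^2_{\bar{L}}$. It applies co-coercivity at $X(k+1)$ after substituting \eqref{equ9}. This yields the three-point telescoping in $W$ plus an unfavorable term $+h\|\nabla g(k+1)-\nabla g(k)\|^2_{\bar{L}}$. That term is split with a parameter $r$ and absorbed by the $\sigma$-term, which is an $\bar{L}$-seminorm rather than the plain norm you suggest, with $\sigma$ fixed by \eqref{equ11}. The matching condition reads $(h+\gamma)(1+r)^2/r=2/(l\lambda_d)$, which is solvable iff $h+\gamma\le\frac{1}{2\lambda_d l}$. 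This is exactly where $h<\frac{1}{4\lambda_d l}$ comes from, and nothing in your sketch produces that constant. Separately, co-coercivity with a single constant needs $l=\max_i l_i$; with the $\min$ in the statement your inequality is false in general, a defect you share with the paper.

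\textbf{The switching step.} You correctly see that $W^*$, the $W$-recursion and the $\bar{L}$-seminorm all depend on $\delta(k)$. The paper's proof simply writes $L$ for ``the current'' Laplacian and telescopes as if it never changed, so it does not supply the missing idea either. Your remedy does not work, though:
\begin{itemize}
\item On the disagreement subspace, $\|v\|^2_{\bar{L}_q^{\dagger}}-\|v\|^2_{\bar{L}_p^{\dagger}}$ is in general of order $\|v\|^2$; the only bound is $(\lambda_x^{-1}-\lambda_d^{-1})\|v\|^2$.
\item So every switch injects a term of order $\|Z(k)-Z^*\|^2$, i.e.\ $O(V)$ rather than $O(\alpha(k))$, and Lemma~\ref{lem1} no longer applies once switches occur infinitely often.
\item A dwell-time assumption changes the theorem's hypotheses. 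By itself it still would not make these jumps summable, since for merely convex costs there is no contraction between switches to absorb them.
\end{itemize}

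What is needed is a Lyapunov function independent of $\delta(k)$. For instance, set $Y(k)=C-hZ(k-1)\in\Omega$; then $X(k)=2Y(k)-Y(k-1)$ and $Y(k+1)=Y(k)-h\bar{L}_{\delta(k)}\nabla\hat g(k)$. In the error-free case, the descent lemma (with $l=\max_i l_i$) and $\|\bar{L}_p v\|^2\le\lambda_d\|v\|^2_{\bar{L}_p}$ show that $G(Y(k))+\frac{lh^2\lambda_d}{2}\|\nabla g(k-1)\|^2_{\bar{L}_{\delta(k-1)}}$ decreases by at least $h(1-\frac32 lh\lambda_d)\|\nabla g(k)\|^2_{\bar{L}_{\delta(k)}}$ for all $k\ge1$ under arbitrary switching. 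This gives vanishing gradient disagreement, though convergence of the iterates to a single point would still need a separate argument.

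Once a valid one-step inequality is in place, your remaining steps are sound: exact feasibility for $k\ge1$, the Opial-type argument, and recovering $Z(k)$ from the $X$-update. They are also more careful than the corresponding parts of the paper.
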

	\begin{proof}
		Without loss of generality, use $L$ to represent the Laplacian matrix of the communication topology of the multi-agent system at the current time.
	Let $E(k)=\operatorname{col}(e_1(k),\cdots,e_n(k))$, it follows from Lemma \ref{lem3} that $\|E(k)\|\leqslant \sqrt{n}\alpha(k).$ If $X^*$ is the solution to the resource allocation problem \eqref{equ1}, and there exists $W^*$ satisfying $X^*=C-h\sqrt{\bar{L}^{\delta(k)}}W^*$, then we have
	\begin{equation}\label{equ9}
		\left\{
		\begin{aligned}
			X(k+1)-X^*=&h\sqrt{\bar{L}}(W^*-W(k+1))\\
			&+h\bar{L}(\nabla\hat{g}(k+1)-\nabla\hat{g}(k)),\\
			W(k+1)-W(k)=&\sqrt{\bar{L}}\nabla g(k+1)-\sqrt{\bar{L}}E(k+1).
		\end{aligned}
		\right.
	\end{equation}
	Since $g_i$ satisfies the Lipschitz condition, one has
		\begin{align*}
		   \frac{2}{l\lambda_d}&\|\nabla g(k+1)-\nabla g(X^*)\|^2_{\bar{L}}\leqslant\frac{2}{l}\|\nabla g(k+1)-\nabla g(X^*)\|^2\\
			=&h\|W(k)-W^*\|^2-h\|W(k+1)-W^*\|^2\\
			&-h\|W(k)-W(k+1)\|^2+h\|\nabla g(k+1)-\nabla g(X^*)\|^2_{\bar{L}}\\
			&+h\|\nabla g(k+1)-\nabla g(k)\|^2_{\bar{L}}-h\|\nabla g(k)-\nabla g(X^*)\|^2_{\bar{L}}\\
			&+2h\langle\sqrt{\bar{L}}E(k+1),  W^*-W(k+1)\rangle\\
			&-2h\langle \nabla g(k+1)-\nabla g(X^*),  \bar{L}(E(k+1)-E(k))\rangle.
		\end{align*}
	One can deduce
	\begin{align*}
		&h\left\|W(k+1)-W^*\right\|^2\\
		&+\left(\frac{2}{l\lambda_d}-h\right)\left\|\nabla g(k+1)-\nabla g(X^*)\right\|_{\bar{L}}^2 \\
		\leqslant&h\left\|W(k)-W^*\right\|^2-h\left\|W(k)-W(k+1)\right\|^2\\
		&-\gamma\left\|\nabla g(k)-\nabla g(k+1)\right\|_{\bar{L}}^2-h\left\|\nabla g(k)-\nabla g(X^*)\right\|_{\bar{L}}^2\\
		&+\left(h+\gamma\right)\left\|\nabla g(k)-\nabla g(k+1)\right\|_{\bar{L}}^2\\
		&+2 h\left\langle\sqrt{\bar{L}} E(k+1), W^*-W(k+1)\right\rangle\\
		&-2h\left\langle \nabla g(k+1)-\nabla g(X^*), \bar{L}\left(E(k+1)-E(k)\right)\right\rangle,
	\end{align*}
	where $0<\gamma<h.$
	Since
	\begin{equation*}
	\begin{aligned}
	    &(h+\gamma)\left\|\nabla g(k)-\nabla g(k+1)\right\|_{\bar{L}}^2 \\
		\leqslant&\left(h+\gamma\right)\cdot(\frac{1+r}{r}\left\|\nabla g(k+1)-\nabla g(X^*)\right\|_{\bar{L}}^2\\
		&+(1+r)\left\|\nabla g(k)-\nabla g(X^*)\right\|_{\bar{L}}^2),
	\end{aligned}	
	\end{equation*}
	where $r>0$, then
	\begin{align}\label{equ10}
		&h\|W(k+1)-W^*\|^2\nonumber\\
		+&\bigg(\frac{2}{\lambda_dl}-h-\left(h+\gamma\right)\frac{1+r}{r}\bigg)\left\|\nabla g(k+1)-\nabla g(X^*)\right\|^2_{\bar{L}} \nonumber\\
		\leqslant h&\left\|W(k)-W^*\right\|^2+(\left(h+\gamma\right)(1+r)-h)\nonumber\\
		\times&\left\|\nabla g(k)-\nabla g(X^*)\right\|_{\bar{L}}^2-h\left\|W(k)-W(k+1)\right\|^2\nonumber\\
		-&\gamma\left\|\nabla g(k)-\nabla g(k+1)\right\|_{\bar{L}}^2\nonumber\\
		+&2h\left\langle\sqrt{\bar{L}} E(k+1), W^*-W(k+1)\right\rangle\\
		-&2h\left\langle \nabla g(k+1)-\nabla g(X^*), \bar{L}\left(E(k+1)-E(k)\right)\right\rangle.\nonumber
	\end{align}

	In order to verify the convergence of algorithm \eqref{equ1}, let 
	\begin{equation}\label{equ11}
		(h+\gamma)(1+r)-h=\frac{2}{l\lambda_d}-h-(h+\gamma)\frac{1+r}{r}.
	\end{equation}
	By the existence condition of the solution, it follows that $\frac{2(h+\gamma)}{l\lambda_d}\leqslant \left(\frac{1}{l\lambda_d}\right)^2,$ which is equivalent to $h+\gamma\leqslant \frac{1}{2\lambda_dl}.$ Therefore, when $h\leqslant \frac{1}{4\lambda_dl}$, there exists a value $r$ that makes equation \eqref{equ11} valid.
	 
	Consider the energy function 
	$
		V(k)=h\|W(k)-W^*\|^2+\sigma\|\nabla g(k)-\nabla g(X^*)\|^2_{\bar{L}},
	$ 
	where $\sigma = (h+\gamma)(1+r)-h.$
	Therefore, it is given by equation \eqref{equ10} that
		\begin{align}\label{equ12}
			&V(k+1) \nonumber\\
			\leqslant& V(k)-h\left\|W(k)-W(k+1)\right\|^2\nonumber\\
			&-\gamma\left\|\nabla g(k)-\nabla g(k+1)\right\|_{\bar{L}}^2 \nonumber\\
			&+2h\left\langle\sqrt{\bar{L}} E(k+1), W^*-W(k+1)\right\rangle\\
			&-2h\left\langle \nabla g(k+1)-\nabla g(X^*), \bar{L}\left(E(k+1)-E(k)\right)\right\rangle\nonumber.
		\end{align}
	It is given that $2\langle p,q\rangle \leqslant s\|p\|^2\|q\|+\frac{1}{s}\|q\|$, where $p,q \in \mathbb{R}^{m}, s\in \mathbb{R}_+$ and $\|E(k+1)\|\leqslant \sqrt{n}\alpha(k+1)\leqslant\sqrt{n}\alpha(k)$, then  we get 
	\begin{align*}
		&2h\bigg\langle\sqrt{\bar{L}} E(k+1), W^*-W(k+1)\bigg\rangle\\
		&-2h\left\langle \nabla g(k+1)-\nabla g(X^*), \bar{L}\left(E(k+1)-E(k)\right)\right\rangle \\
		\leqslant & h s_1 \sqrt{n \lambda_d} \alpha(k)\left\|W^*-W(k+1)\right\|^2+\frac{h \sqrt{n\lambda_d}}{s_1} \alpha(k) \\
		&+2h s_2 \sqrt{n \lambda_d}\alpha(k)\left\|\nabla g(k+1)-\nabla g(X^*)\right\|_{\bar{L}}^2\\
		&+\frac{2h \sqrt{n\lambda_d}}{s_2} \alpha(k),
	\end{align*}
	where $s_1,s_2\in\mathbb{R}_+$. Substitute the above equation into  \eqref{equ7}, then one gets 
	\begin{equation*}
	\begin{aligned}
		V(k+1) \leqslant& V(k)-h\left\|W(k)-W(k+1)\right\|^2\\
		&-\gamma\left\|\nabla g(k)-\nabla g(k+1)\right\|_{\bar{L}}^2 \\
		&+hs_1\sqrt{n \lambda_d} \alpha(k)\|W^*-W(k+1)\|^2\\
		&+2hs_2 \sqrt{n \lambda_d} \alpha(k)\left\|\nabla g(k+1)-\nabla g(X^*)\right\|_{\bar{L}}^2\\
		&+\frac{h(2s_1+s_2) \sqrt{n \lambda_d}}{s_1s_2} \alpha(k).
	\end{aligned}
	\end{equation*}
	Let $s_1=\frac{\sigma}{2 h \sqrt{n\lambda_d } \alpha^{0}},\  s_2=\frac{1}{4 \sqrt{n\lambda_d} \alpha^{0}}$, then one has 
	$$
	\begin{aligned}
		V(k+1) \leqslant &V(k)+\frac{h(2s_1+s_2) \sqrt{n \lambda_d}}{s_1s_2}\alpha(k)+\frac{\alpha(k)}{2 \alpha^0} V(k+1).
	\end{aligned}
	$$
	The above formula is equivalent to
	\begin{equation}\label{equ13}
		\left(\frac{2\alpha^0-\alpha(k)}{2 \alpha^0}\right) V(k+1) \leqslant V(k)+\frac{h(2s_1+s_2) \sqrt{n \lambda_d}}{s_1s_2}\alpha(k),
	\end{equation}
	 where $\alpha(k)$ is summable. It can be obtained by Lemma \ref{lem3} that $V(k)$ is bounded, denoted as $\tilde{V}$. Let $\xi_k=\frac{h(2s_1+s_2) \sqrt{n \lambda_d}}{s_1s_2}\alpha (k)$, it can be obtained by equation \eqref{equ12} that
	$$
	\begin{aligned}
		V(k+1)\leqslant	&V(k)-h\left\|W(k)-W(k-1)\right\|^2+\xi(k)\\
		&-\gamma\left\|\nabla g(k)-\nabla g(k+1)\right\|_{\bar{L}}^2+\frac{\alpha(k)}{2 \alpha^0} V(k+1).		
	\end{aligned}
	$$
	Thus, the sum of the above formula from 0 to $\bar{k}$ is obtained
	\begin{align*}
	\sum_{k=0}^{\bar{k}}&(h\|W(k+1)-W(k)\|^2+r\|\nabla g(k)-\nabla g(k+1) \|_{\bar{L}}^2)\\
	&\leqslant V^0-V^{\bar{k}+1}+\sum_{k=0}^{\bar{k}} \frac{\alpha(k)}{2\alpha^0} V(k+1)+\sum_{k=0}^{\bar{k}}\xi(k).
	\end{align*}
	Let $\bar{k} \rightarrow \infty$, then we get
	\begin{align*}
		\sum_{k=0}^{\infty}&(h\|W(k+1)- W(k)\|^2+\gamma\|\nabla g(k)-\nabla g(k+1)\|_{\bar{L}}^2)\\
		&\leqslant V^0+\sum_{k=0}^{\infty} \frac{\tilde{V}}{2 \alpha^0}\alpha(k)+\sum_{k=0}^{\infty}\xi(k) < \infty.
	\end{align*}
	Therefore, one has 
	$\lim _{k \rightarrow \infty}\left\|W(k+1)-W(k)\right\|^2=0$, $\lim _{k \rightarrow \infty}\left\|\nabla g(k)-\nabla g(k+1)\right\|^2_{\bar{L}}=0.$ 
	Thus, it can be get from algorithm \eqref{equ7} that  
	$\lim _{k \rightarrow \infty} X(k+1)-C+h \sqrt{\bar{L}}\nabla\hat{g}(k)=0.$ Multiply both sides of the equation by $\mathbf{1}_n^\top$ can get 
	\begin{equation}\label{equ14}
		\lim_{k \rightarrow \infty} \sum_{i=1}^n X_i(k+1)-\sum_{i=1}^n C_i=0.
	\end{equation}
	This means that the coupling equation constraint is satisfied and it is obtained from equation \eqref{equ9} that
	$$
		\begin{aligned}
			&\|\sqrt{\bar{L}}\nabla g(k+1)\|=\|W(k+1)-W(k)+\sqrt{\bar{L}}E(k+1)\|\\
			\leqslant&\|W(k+1)-W(k)\|+\|\sqrt{\bar{L}}E(k+1)\|,
		\end{aligned}
	$$
	which means 
	\begin{equation}\label{equ15}
		\lim\limits_{k\rightarrow\infty} \sqrt{\bar{L}}\nabla g(k+1)=0.
	\end{equation}
	It follows from equation \eqref{equ14}, \eqref{equ15} that the optimization condition is satisfied as $k\rightarrow \infty$.
	
	Next, we prove the convergence of $\{X(k)\}$ and $\{W(k)\}$.
	From equation \eqref{equ13} and Lemma \ref{lem1} in \cite{Li2024}, for all $k>k_1$, the following inequality holds
	$$
	V(k+1) \leqslant\left(V^{k_1}+\sum_{j=k_1}^{\infty} \xi^j\right) \exp \left\{\sum_{j=k_1}^{\infty} \frac{\alpha^j}{\alpha^0}\right\}.
	$$
	Since $\{V(k+1)\}$ is bounded, $\{X(k)\}$ and $\{W(k)\}$ are bounded. Take two subcolumns from each of these sequences, denoted $\{X^{k_i}\}$ and $\{W^{k_i}\}$ respectively. Then for all $\varepsilon >0$, there exists $k_2\in \mathbb{N}$, when $k>k_2$, it satisfies that $V^{k_2}<\varepsilon/4$.
	Given that the summability of $\{\xi(k)\}$ and $\{\alpha(k)\}$, there exist $ k_3,k_4\in \mathbb{N}$, when $k>\text{max}\{k_3,k_4\}$, one has 
	$\sum_{j=k_1}^{\infty}\xi^j<\varepsilon/4$ and $\sum_{j=k_1}^{\infty}\alpha^j \leqslant\alpha^0\log2$. It follows that when $k>\max\{k_1,k_2,k_3,k_4\}$ is satisfied, we have $V(k+1)\leqslant \varepsilon$. Then $X(k+1)$ converges to $X^*$, where $X^*$ is the optimal solution to resource allocation problem \eqref{equ1}, and $W(k)$ converges to $W^*$.
	\end{proof}

		\begin{remark}	
			The switching-topology framework can also provide an interpretation for intermittent communication failures caused by denial-of-service (DoS) attacks. Specifically, DoS attacks may block some communication links, so that the communication graph switches between normal connected graphs and DoS-affected graphs. During DoS-affected instants, agents can keep using the latest successfully transmitted gradient information. If the DoS-affected instants are sufficiently sparse, the Lyapunov decrease accumulated during normal connected instants can dominate the possible temporary growth caused by communication blocking. Hence, the convergence mechanism in Theorem \ref{the1} suggests a possible extension to bounded DoS-induced communication interruptions.	
		\end{remark}

\begin{remark}
	The global constants in the step-size condition of Theorem \ref{the1} can be obtained or estimated distributively. For example, $\lambda_d$ can be bounded by $\lambda_d\leq 2d_{\max}$, where $d_{\max}=\max_i|\mathcal N_i|$ is available through local degree exchange, and $l=\min\{l_1,\cdots,l_n\}$ can be obtained via network-wide communication. More accurate Laplacian eigenvalue estimates can be computed by distributed protocols such as \cite{zareh2018distributed}.
\end{remark}
	\begin{theorem}\label{the2}
	If the cost function $g_i$ is strongly convex and step size satisfies $h<1/(8\lambda_dl)$, where $\lambda_d$ and $l$ are the same as the Theorem \ref{the1}, then $\{X(k)\}$ converges to $X^*$ at a linear rate.
	\end{theorem}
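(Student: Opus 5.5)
The plan is to reuse the Lyapunov machinery from the proof of Theorem \ref{the1}, and to upgrade the one-step inequality \eqref{equ12} into a contraction $V(k+1)\le (1-\kappa)V(k)+\xi(k)$ for some $\kappa\in(0,1)$. The event-triggering error in Lemma \ref{lem3} is bounded by the geometric sequence $\alpha(k)=\mathcal{O}((1-\tau)^k)$, with $1-\tau=\max_i(1-\tau_i)$. Combining this error bound with the contraction will then give linear convergence of $V(k)$, and hence of $X(k)$.

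\textbf{Step 1: a strongly convex analogue of \eqref{equ10}.} Let $\mu=\min_i \mu_i$ denote the strong-convexity modulus. I will use the strengthened cocoercivity inequality
\begin{equation*}
\langle \nabla g(x)-\nabla g(y),x-y\rangle\ge \frac{\mu l}{\mu+l}\|x-y\|^2+\frac{1}{\mu+l}\|\nabla g(x)-\nabla g(y)\|^2 .
\end{equation*}
I will insert it in place of the plain Lipschitz/cocoercive bound used at the start of the proof of Theorem \ref{the1}, with $x=X(k+1)$ and $y=X^*$. The first relation of \eqref{equ9} expresses $X(k+1)-X^*$ through $\sqrt{\bar L}(W^*-W(k+1))$ and $\bar L$ times a gradient difference. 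This produces an extra negative term $-c_1\|X(k+1)-X^*\|^2$ on the right-hand side. The halved step bound $h<1/(8\lambda_d l)$ (instead of $1/(4\lambda_d l)$) is what I expect to leave enough slack in \eqref{equ11}, after the cross terms are absorbed, to keep both a strictly positive coefficient $\sigma'$ on $\|\nabla g(k+1)-\nabla g(X^*)\|_{\bar L}^2$ and a residual margin $\epsilon\|\nabla g(k+1)-\nabla g(X^*)\|^2_{\bar L}$.

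\textbf{Step 2: dominating $\|W(k+1)-W^*\|^2$ (the main obstacle).} To contract $V$, the decrease terms must control the $W$-part of $V$. Since $Z(k)$ lies in the disagreement subspace, I will choose $W^*$ and $W(k)$ in the range of $\sqrt{\bar L}$. On that range $\|\sqrt{\bar L}(W-W^*)\|^2\ge\lambda_x\|W-W^*\|^2$, and $\lambda_x>0$ uniformly over the finitely many graphs. The optimality condition $\sqrt{\bar L}\nabla g(X^*)=0$ together with \eqref{equ9} gives
\begin{equation*}
h\sqrt{\bar L}(W(k+1)-W^*)=-(X(k+1)-X^*)+h\bar L(\nabla\hat g(k+1)-\nabla\hat g(k)).
\end{equation*}
Using $(a+b)^2\le 2a^2+2b^2$ and the Lipschitz bound, I can then estimate
\begin{align*}
\|W(k+1)-W^*\|^2\le{}& \frac{2}{h^2\lambda_x}\|X(k+1)-X^*\|^2\\
&+\frac{2\lambda_d}{\lambda_x}\|\nabla\hat g(k+1)-\nabla\hat g(k)\|^2_{\bar L}.
\end{align*}
Both terms on the right are available: the first from the Step 1 strong-convexity term, and the second from the $-\gamma\|\nabla g(k)-\nabla g(k+1)\|^2_{\bar L}$ term, up to $E$-errors. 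The delicate part is that the switching graph changes $W^*$ (and the factorization $Z=\sqrt{\bar L}W$) with $\delta(k)$. I would first try to work directly with the $Z$-variables, i.e.\ with $\|Z(k)-Z^*\|^2$ where $Z^*=\sqrt{\bar L}W^*$ is graph-independent and $\nabla g(X^*)\in\mathrm{span}(\mathbf 1)$. I would then use the uniform constants $\lambda_x,\lambda_d$ to compare norms, so that a single $V$ works for all $\delta(k)$.

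\textbf{Step 3: handling the error terms and closing.} The inner products with $E(k+1)$ and $E(k)$ are bounded as in the proof of Theorem \ref{the1}, but now with Young's inequality with a fixed small weight rather than the $\alpha(k)$-weighted one. Part of each product is absorbed into $\kappa V(k+1)$, and the remainder is a term $\mathcal{O}(\alpha(k)^2)$ or $\mathcal{O}(\alpha(k))$, which is still geometric. This yields
\begin{equation*}
V(k+1)\le \frac{1}{1+\kappa}V(k)+c_2\alpha(k).
\end{equation*}
Setting $q=\max\{(1+\kappa)^{-1},1-\tau\}<1$ and unrolling the recursion gives $V(k)\le c_3 (k+1) q^k$, which is dominated by a constant times $\bar q^k$ for any $\bar q\in(q,1)$. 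Finally, $\|X(k+1)-X^*\|^2$ is bounded by a constant multiple of $V(k+1)$ plus $\|E\|^2$ terms via the relation above. Therefore $X(k)\to X^*$ linearly, and $X^*$ is unique by strong convexity.
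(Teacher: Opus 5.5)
Your outline follows the paper's proof essentially step for step. Strong convexity supplies a $-\mu\|X(k+1)-X^*\|^2$ term. The $W$-part of the Lyapunov function is dominated through \eqref{equ9} and $\lambda_x$, exactly as in your Step~2. A fixed Young weight then gives $(1+b)\bar V(k+1)\le\bar V(k)+v\alpha(k)^2$, which is unrolled against the geometric bound on $\alpha(k)$. Your unrolling with $q=\max\{(1+\kappa)^{-1},1-\tau\}$ is cleaner than the paper's.

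\textbf{The gap.} The problem is the switching step you flagged: your remedy does not close it. The telescoping that produces $h\|W(k)-W^*\|^2-h\|W(k+1)-W^*\|^2$ is
\[
\langle Z(k+1)-Z^*,\nabla\hat g(k+1)\rangle=\big\langle Z(k+1)-Z^*,\bar L_{\delta(k+1)}^{\dagger}\big(Z(k+1)-Z(k)\big)\big\rangle .
\]
So the $W$-part of $V$ is $\|Z-Z^*\|^2$ in the pseudo-inverse metric of the graph active at step $k+1$. The ``old'' value it telescopes against is $\|Z(k)-Z^*\|^2_{\bar L^{\dagger}_{\delta(k+1)}}$, not the $\bar L^{\dagger}_{\delta(k)}$-norm stored in $V(k)$. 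No single matrix $M$ satisfies $M\bar L_p=P$ (the projector onto the disagreement subspace) for every $p$. Hence the plain norm $\|Z-Z^*\|^2$ does not telescope, and converting between two graphs' metrics via $\lambda_x,\lambda_d$ costs a factor up to $\lambda_d/\lambda_x$ at each switch. The $\|\cdot\|_{\bar L}$ gradient term has the same defect. Moreover, under switching, \eqref{equ9} itself picks up the extra term $h(\bar L_{\delta(k+1)}-\bar L_{\delta(k)})\nabla\hat g(k)$.

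At a switching step, the best your norm comparison gives is essentially $V(k+1)\le(1+\kappa)^{-1}(\lambda_d/\lambda_x)V(k)+O(\alpha(k))$. Your contraction margin is tiny, $\kappa=O(\mu\lambda_x h)=O\big(\mu\lambda_x/(\lambda_d l)\big)$. So this factor exceeds $1$ as soon as $\lambda_d/\lambda_x>1+\kappa$. Under arbitrary switching nothing contracts, and ``a single $V$ works for all $\delta(k)$'' does not follow from the uniform constants alone.

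\textbf{The paper has the same gap.} It fixes $L$ ``without loss of generality'' and works with \eqref{equ7}, whose $W$-update is compatible with $Z(k)=\sqrt{\bar L_{\delta(k)}}W(k)$ only when $\delta(k)=\delta(k+1)$. So, as written, both arguments really establish the linear rate for a fixed graph. They would also cover switching with a dwell time long enough that the contraction accumulated between switches beats the $\lambda_d/\lambda_x$ loss. Covering arbitrary switching needs such an extra assumption, or a Lyapunov function that does not depend on $\delta(k)$.

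A smaller point: your Step~1 inequality requires $l\ge\max_i l_i$ and $\mu\le\min_i\mu_i$, whereas the theorem's $l$ is $\min_i l_i$.
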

	\begin{proof}
		The following proof is similar to the derivation of equation \eqref{equ6} and the strong convexity of $g(\cdot) $, one has

	\begin{align*}
		h&\|W(k+1)-W^*\|^2+\mu\|X(k+1)-X^*\|^2\\
		&+\left(\frac{1}{l\lambda_d}-h-(h+\gamma) \frac{1+r}{r}\right)\|\nabla g(k+1)-\nabla g(X^*)\|_{\bar{L}}^2\\	
		\leqslant &h\left\|W(k)-W^*\right\|^2+\left((h+\gamma)(1+r)-h\right)\\
		&\times\left\|\nabla g(k)-\nabla g(X^*)\right\|_{\bar{L}}^2 \\
	    &-h\left\|W(k)-W(k+1)\right\|^2-\gamma\left\|\nabla g(k)-\nabla g(k+1)\right\|_{\bar{L}}^2\\
	    &+2h\left\langle\sqrt{\bar{L}} E(k+1), W^*-W(k+1)\right\rangle\\
	    &-2h\left\langle \nabla g(k+1)-\nabla g(X^*), \bar{L}\left(E(k+1)-E(k)\right)\right\rangle.
	\end{align*}
    Then letting
    \begin{equation}\label{equ16}
	    \begin{aligned}
			\frac{1}{l\lambda_d}-h-\left(h+\gamma\right) \frac{1+r}{r}
			=\left(h+\gamma\right)(1+r)-h,
		\end{aligned}    	
    \end{equation}
	where the condition for the existence of the solution is $-\frac{4(h+\gamma)}{l\lambda_d}+(\frac{1}{l\lambda_d})^2\geqslant0$, one has $h+\gamma\leqslant \frac{1}{4l\lambda_d}$. So when $h\leqslant \frac{1}{8\lambda_dl}$,  there exists a value $r$ that makes equation \eqref{equ16} valid.

	Take the following Lyapunov function $$\bar{V}(k)=h\left\|W(k+1)-W^*\right\|^2+\sigma^{\prime}\left\|\nabla g(k+1)-\nabla g(X^*)\right\|_{\bar{L}}^2,$$
	where $\sigma^{\prime}=\frac{1}{l\lambda_d}-h-\left(h+\gamma\right) \frac{1 +r}{r}$.
	So it can be obtained that
	$$
	\begin{aligned}
		\bar{V}&(k+1)\leqslant \bar{V}(k)-\mu\left\|X(k+1)-X^*\right\|^2\\
		&-h\left\|W(k)-W(k+1)\right\|^2-\gamma\left\|\nabla g(k+1)-\nabla g(k)\right\|_{\bar{L}}^2\\
		& +2 h\left\langle\sqrt{\bar{L}} E(k+1), W^*-W(k+1)\right\rangle\\
		&-2h\left\langle \nabla g(k+1)-\nabla g(X^*), \bar{L}(E(k+1)-E(k))\right\rangle,
	\end{aligned}
	$$
	where 
	\begin{align*}
		&2h\langle\sqrt{\bar{L}}E(k+1), W^*-W(k+1)\rangle\\
		&-2h\left\langle \nabla g(k+1)-\nabla g(X^*), \bar{L}(E(k+1)-E(k))\right\rangle\\
		\leqslant &hb\lambda_dl^2\|X(k+1)-X^*\left\|^2+hb\right\| W(k+1)-W^* \|^2\\
		&+\frac{4hn \lambda_d^2}{b}(\alpha(k))^2+\frac{hn\lambda_d}{b}(\alpha (k))^2,
	\end{align*}
	where $b>0$. From equation \eqref{equ9}, it can be derived that
	\begin{align*}
	&\lambda_x h^2\left\|W(k+1)-W^*\right\|^2\\ 
	\leqslant&\left\|X(k+1)-X^*\right\|^2\\
	&+\left\|h \bar{L}\left(\nabla g(k+1)-E(k+1)-\nabla g(k)+E(k)\right)\right\|^2\\
	\leqslant&\left\|X(k+1)-X^*\right\|^2+h^2 \lambda_d^2 \|\nabla g(k+1)-\nabla g(k) \|_{\bar{L}}^2\\
	&+4h^2 \lambda^2_d n\left(\alpha(k)\right)^2.
	\end{align*}
	One has
	\begin{align*}
		&h\|W(k+1)-W^*\|^2 \\
		\leqslant& \frac{1}{\lambda_x h}\left\|X(k+1)-X^*\right\|^2
		+\frac{\lambda_d^2 h}{\lambda_x}\left\|\nabla g(k+1)-\nabla g(k)\right\|_{\bar{L}}^2\\
		&+\frac{4 h {\lambda_d}^2 n}{\lambda_x }\left(\alpha(k)\right)^2.
	\end{align*}
	Thus, we have 
	\begin{align*}
		&(1+b) \bar{V}(k+1) \\
		\leqslant& \bar{V}(k)-\left(\mu-b\left(\frac{2}{\lambda_x h}+h\lambda_dl^2+\sigma^{\prime}\lambda_dl^2\right)\right)\\
		&\times\left\|X(k+1)-X^*\right\|^2-h\left\|W(k)-W(k+1)\right\|^2\\
		&-\left(\gamma-\frac{2\lambda_d^2 h}{\lambda_x} b\right)\left\|\nabla g(k+1)-\nabla g(k)\right\|_{\bar{L}}^2 \\
		&+\left(\frac{4 h n \lambda_d^2}{b}+\frac{hn\lambda_d}{b}+\frac{8h\lambda_d^2n}{\lambda_x}b\right)\left(\alpha(k)\right)^2.
	\end{align*}
Take $b=\min \left\{\frac{1}{\mu}({\frac{2}{\lambda_dh}+h\lambda_dl^2+\sigma^{\prime}\lambda_dl^2}), \frac{\gamma \lambda_x}{2\lambda_d^2 h}\right\}$, $v=\frac{1}{b}(4 h n \lambda_d^2+h \lambda_d n)+\frac{4 h \lambda_d^2 n}{\lambda_x} b$. 
	Then $(1+b) \bar{V}(k+1) \leqslant \bar{V}(k)+v\left(\alpha(k)\right)^2$, so one has
	$$
	\begin{aligned}
		\bar{V}(k+1) \leqslant &(1+b)^{-1}\left(\bar{V}(k)+v\left(\alpha(k)\right)^2\right)\\
		\leqslant &(1+b)^{-(k+1)} \bar{V}^0+v\sum_{j=0}^k(1+b)^{-(k+1-j)}\left(\alpha(j)\right)^2.
	\end{aligned}
	$$
	According to Lemma \ref{lem3}, there exists $$0<C=(1-\tau_i)\eta_{i}^{0}+c\frac{1-\tau_i}{1-\tau_i-\beta_i}+c<+\infty,$$ such that
	$\alpha(k)\le C(1-\tau_i)^{k}.$
	Then, it can be inferred that
	$$
	\begin{aligned}
		&\bar{V}(k+1) \le(1+b)^{-(k+1)}\big[\bar{V}^0+vC^2\frac{1}{(1+b)(1-\tau)^{2}}\big].
	\end{aligned}
	$$
	Take$$
	\begin{aligned}
		&b=\min \left\{\frac{\mu}{\frac{2}{\lambda_dh}+h\lambda_dl^2+\sigma^{\prime}\lambda_dl^2}, \frac{\gamma \lambda_x}{2\lambda_d^2 h},\frac{1}{(1-\tau)^{2}}-1\right\}.\\
	\end{aligned}
	$$
	From the above deduction, it can be concluded that $\bar{V}(k)$ converges to $0$ at the linear rate of $\mathcal{O}((1+b)^{-(k+1)})$, if $h, b$ and other parameters are properly selected, so the sequence $\{X(k)\}$ and $\{W(k)\}$ converge at the linear rate of $\mathcal{O}((1+b)^{-(k+1)})$.	
	\end{proof}

\section{Numerical simulations}\label{Sec5}	
This section considers a distributed resource allocation problem with global constraints, which is widely used in scenarios such as multi-agent collaborative control, task allocation, smart grid resource coordination, and load balancing in communication networks \cite{8709779,5482198,6544599}. The system consists of six  agents, each agent $i$ holding a local variable $X_i\in\mathbb{R}^{25}$ representing the resource vector allocated to the agent (which can represent bandwidth, computing task blocks, energy units, subtasks, etc.). The global goal is to minimize the local differences between agents, thereby achieving consistent collaboration of the system while satisfying the constraint of total resource conservation in the entire network.

Specifically, consider the following optimization problem
	\begin{equation}
			\begin{split}
					&\min~~ H(X)=\frac{1}{2} \sum_{i=1}^{6}\sum_{j\in \mathcal{N}_i}\|X_i-X_j\|^2\\
					&\text{s.t.}~~ \sum\limits_{i=1}^{6}X_i=\sum\limits_{i=1}^{6}C_i,
				\end{split}\label{equ17}
		\end{equation}
	where $X=\text{col}\{X_1,\cdots,X_6\}\in\mathbb{R}^{150}$. This objective function encourages the consensus of policies between adjacent agents, while global constraints ensure the rationality of overall resource allocation. 	
	
	The dynamic event-triggered discrete-time algorithm \eqref{equ2} and the relevant parameters are defined as follows.
 the value of $C_i$ is as follows (where $C_{ij}$ represents the $j$-th row element of $C_i$):
		In addition, the $i$-th column of the following $25\times 6$ matrix is used to represent $C_i$
		\begin{center}
			$\begin{bmatrix}
				1.4720 & 0.6014 & 1.7078 & 1.7738 & 0.1188 & 0.8276 \\
				1.8142 & 1.9528 & 0.4344 & 1.8450 & 1.5024 & 1.3450 \\
				1.3232 & 0.0036 & 0.9436 & 0.0514 & 0.2892 & 1.8224 \\
				0.2656 & 1.9348 & 0.5782 & 0.1436 & 1.2216 & 1.7764 \\
				1.4912 & 0.3080 & 0.2874 & 0.9638 & 1.0254 & 0.2704 \\
				1.6064 & 1.1896 & 1.4100 & 0.1638 & 0.4154 & 1.4834 \\
				1.5998 & 0.2440 & 0.9670 & 1.6546 & 0.2254 & 0.5090 \\
				1.5496 & 0.0130 & 0.5282 & 1.9142 & 0.5030 & 0.7380 \\
				0.3574 & 0.4168 & 1.3454 & 1.8988 & 0.2198 & 1.0628 \\
				0.8864 & 1.0596 & 0.1388 & 1.3892 & 1.9980 & 1.3156 \\
				1.6260 & 0.8570 & 1.3386 & 1.4806 & 0.9098 & 0.7144 \\
				0.7278 & 1.6782 & 0.4650 & 0.6052 & 1.7588 & 1.0564 \\
				1.6094 & 1.4806 & 0.4344 & 1.3536 & 0.6418 & 0.6154 \\
				1.6204 & 0.9026 & 1.9248 & 1.8044 & 1.0600 & 1.2736 \\
				1.7274 & 0.5046 & 1.8188 & 0.0614 & 1.4356 & 0.0190 \\
				0.3748 & 1.6760 & 1.9664 & 1.0910 & 0.3384 & 0.5370 \\
				1.1294 & 0.5986 & 1.6972 & 1.8698 & 0.3096 & 0.6716 \\
				0.9872 & 0.9380 & 1.1182 & 0.8444 & 1.0052 & 0.1846 \\
				1.2450 & 1.8308 & 1.9560 & 1.9444 & 0.1370 & 1.9052 \\
				1.6198 & 1.8930 & 1.8022 & 1.9626 & 1.8106 & 1.7898 \\
				1.4938 & 0.8032 & 0.2166 & 0.1040 & 1.1600 & 1.8986 \\
				0.8392 & 1.2938 & 1.0308 & 0.9838 & 1.4648 & 0.3574 \\
				0.8268 & 1.2950 & 1.1348 & 1.2766 & 0.6696 & 0.4806 \\
				1.1546 & 1.9612 & 1.3110 & 1.7210 & 0.7962 & 1.2616 \\
				0.9266 & 0.5868 & 0.6904 & 0.0912 & 1.2334 & 1.1432
			\end{bmatrix}.$
		\end{center}
$\theta=\text{col}(\theta_1,\cdots,\theta_6)=\text{col}(0.1,0.2,0.3,0.15,0.25,0.2)$, 
$\tau=\text{col}(\tau_1,\cdots,\tau_6)=\text{col}(0.2,0.1,0.3,0.2,0.3,0.25)$,  
$\beta=\text{col}(\beta_1,\cdots,\beta_6)=\text{col}(0.75,0.85,0.65,0.75,0.68,0.7)$,  
$c=\text{col}(c_1,\cdots,c_6)=\text{col}(2,3,4,5,3,2),\ 
\eta_i(0)=1.5$ $(i=1, \cdots, 6)$. 

\begin{figure}[h]
	\centering  
	\begin{minipage}[t]{0.15\textwidth}
		\label{Fig.sub.1}
		\includegraphics[width=2.7cm]{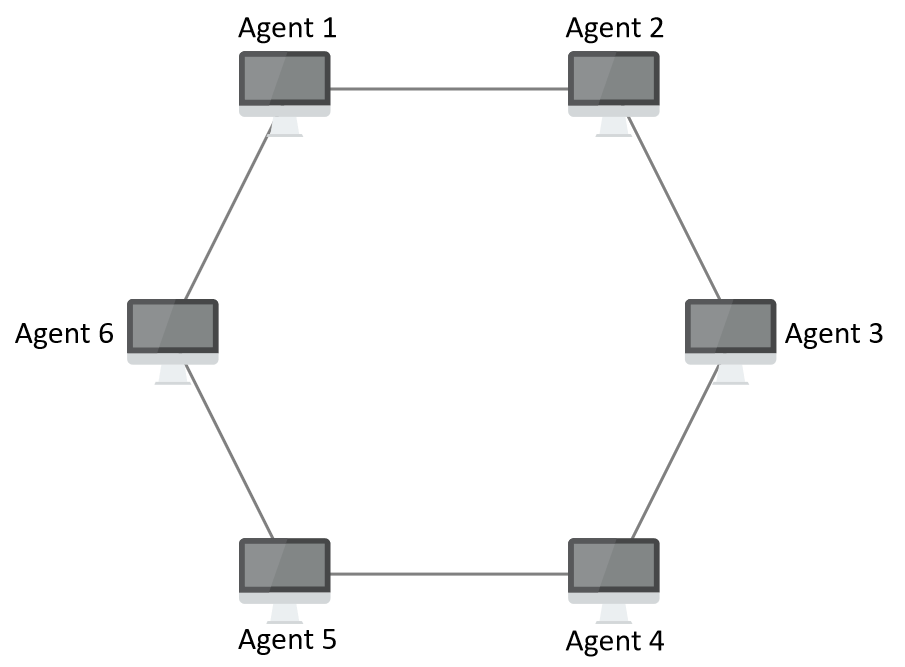}
	\end{minipage}
	\begin{minipage}[t]{0.15\textwidth}
		\label{Fig.sub.2}
		\includegraphics[width=2.7cm]{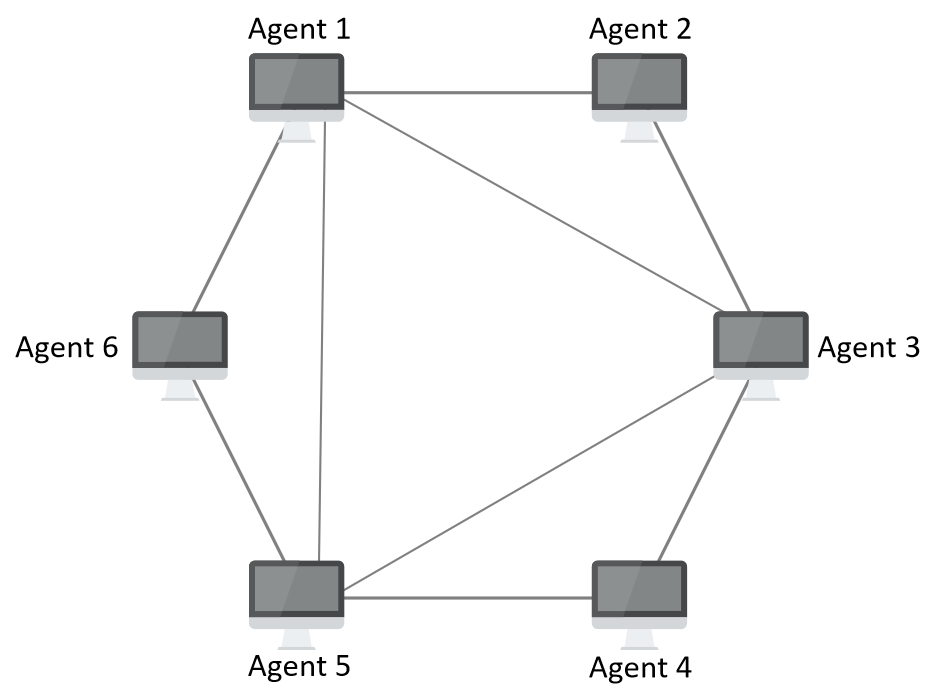}
	\end{minipage}
	\begin{minipage}[t]{0.15\textwidth}
		\label{Fig.sub.3}
		\includegraphics[width=2.7cm]{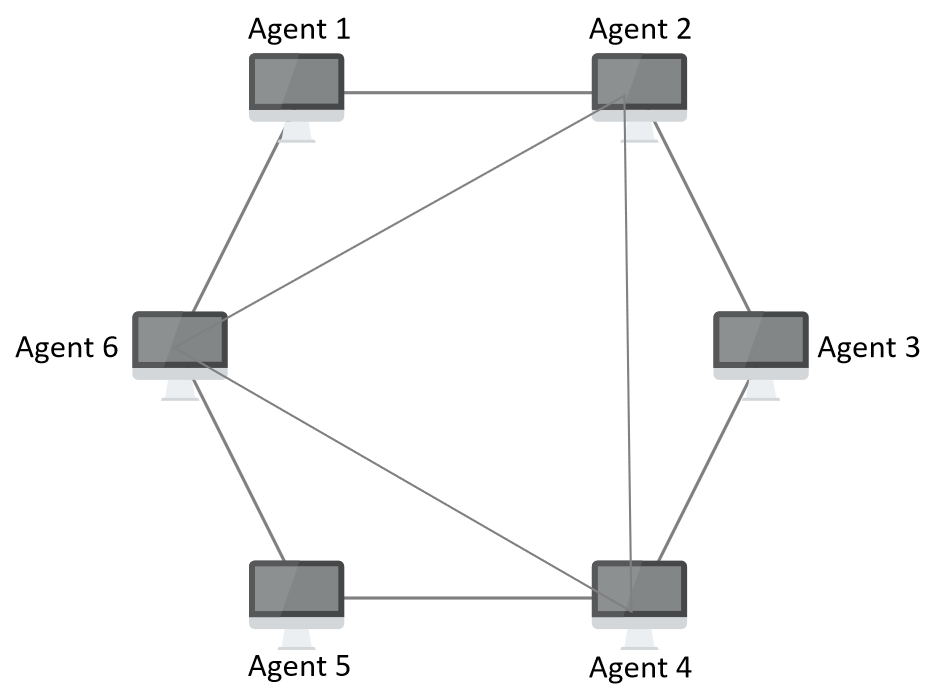}
	\end{minipage}
	\caption{Communication topology of multi-agent system.}
	\vspace{-0.2cm}
	\label{fig1}
\end{figure}	
	The consensus between agents is measured by $\| X_i-X_j \|^2$, while enforcing $\sum_ {i=1}^ {6}X_i =\sum_{i=1}^ {6}C_i$ to maintain a balance between resource supply and demand.   
	The objective function is convex but not strongly convex, since it only penalizes neighboring disagreement and is invariant along the consensus subspace. Therefore, this example directly tests the general convex setting considered in Theorem 1.
	The output of algorithm is shown in Figs. \ref{fig2}-\ref{chu_bi}. Fig. \ref{fig2} shows the state trajectory $X(t)$ of dynamic event-triggered algorithm \eqref{equ2}. 
	It can be seen from the figure that $X_{i}(k)$ is convergent, which means all agents reached consensus and converge to $X^* \in \mathbb{R}^{25\times1}$, which satisfies the constraint of equation \eqref{equ17}. Finally, the optimal solution to the resource allocation problem \eqref{equ17} is $X^*\otimes \mathbf{1}_6$, with $X^*=($1.08, 1.48, 0.74, 0.99, 0.72, 1.04, 0.87,0.87, 0.88, 1.13, 1.15, 1.05, 1.02, 1.43,0.93, 1.00, 1.05, 0.85, 1.50, 1.81, 0.95,0.99, 0.95, 1.37, 0.78$)^\top$.
		\begin{figure}[htbp]
		\centering
		\vspace{-0.2cm}
		\begin{minipage}{0.49\linewidth}
			\centering
			\includegraphics[width=4.5cm]{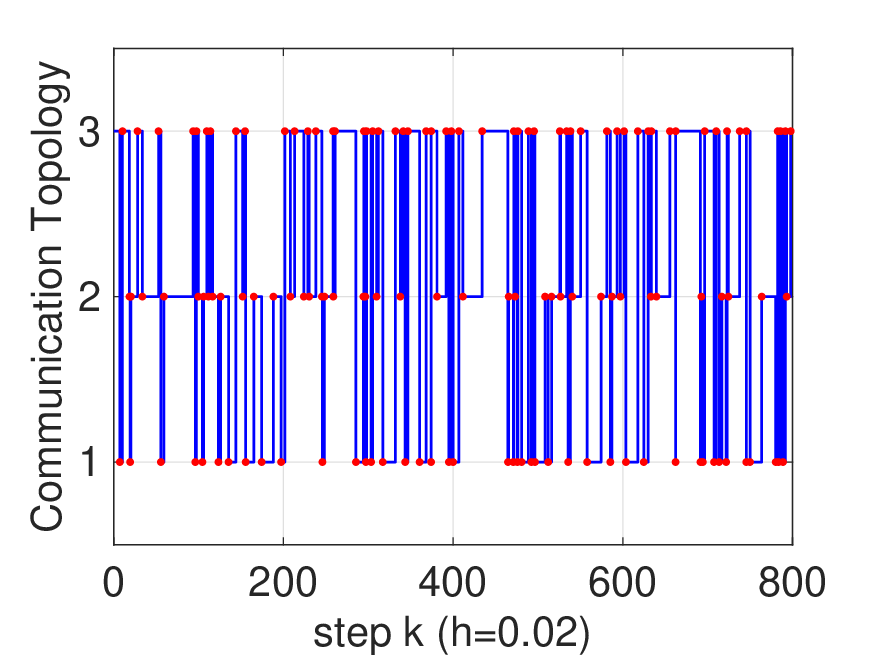}
			\vspace{-0.2cm}
			\caption{The communication topology switching signal.}
			\label{fig6}
		\end{minipage}
		\begin{minipage}{0.49\linewidth}
			\centering
			\includegraphics[width=4.5cm]{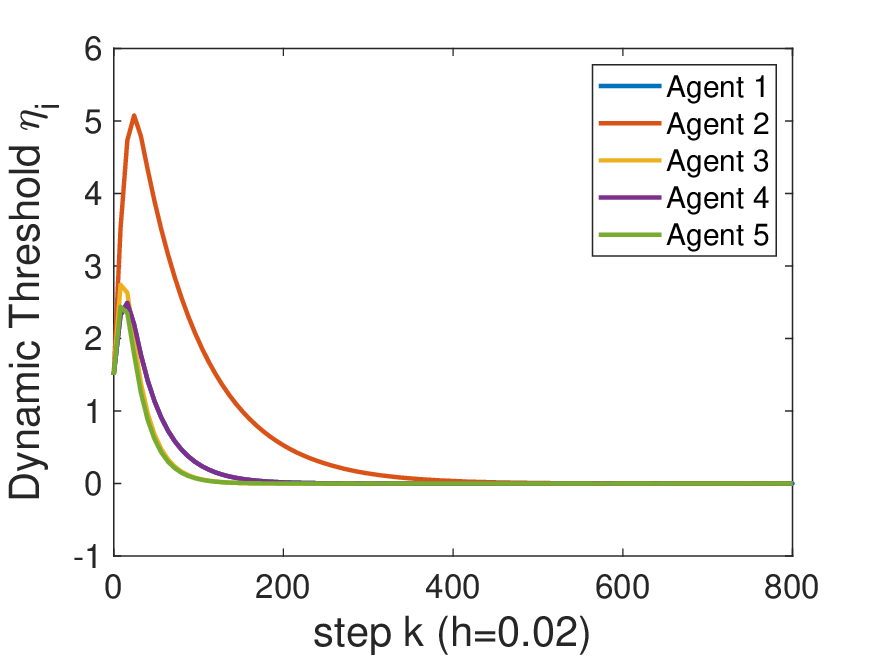}
			\vspace{-0.2cm}
			\caption{Evolution trajectory of dynamic threshold $\eta_{i}$.}
			\label{dy}
		\end{minipage}
		\vspace{-0.2cm}
	\end{figure}
	\begin{figure}[h]
	\centering
	\vspace{-0.2cm}
	\includegraphics[width=9cm]{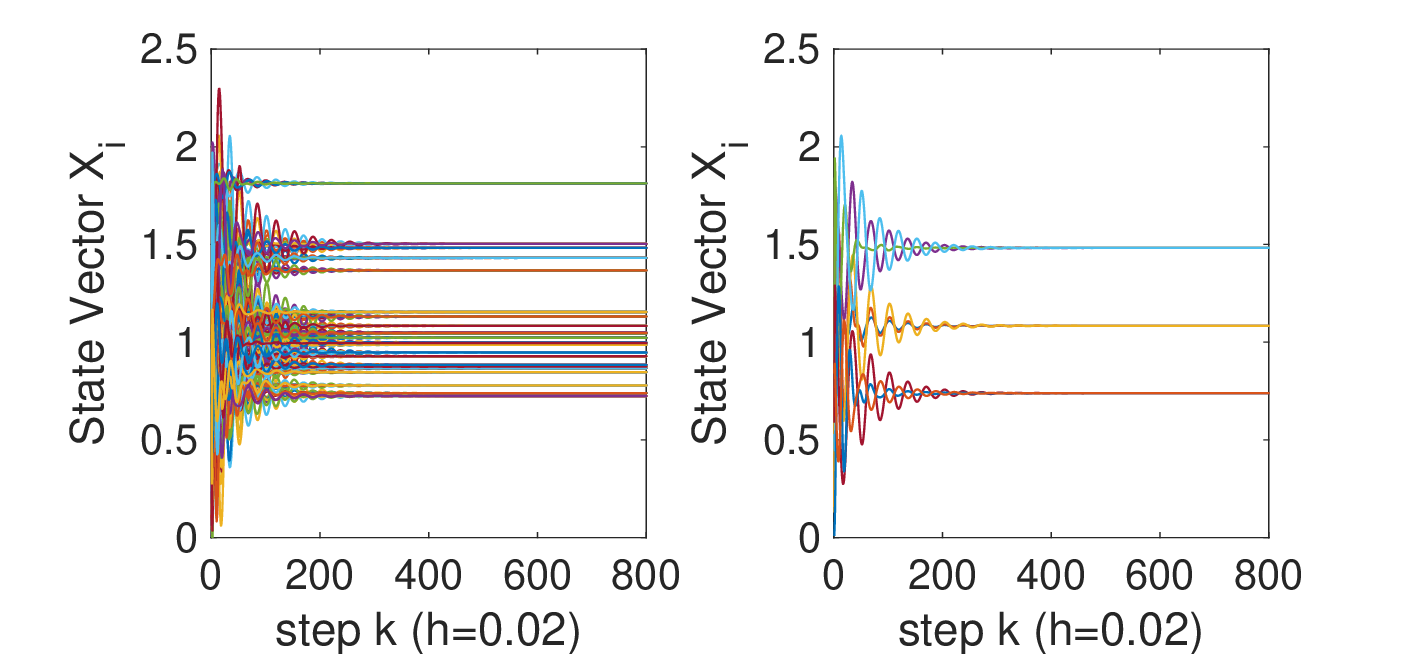}
	\vspace{-0.3cm}
	\caption{State solution trajectory for $X(k)_{i}$ under the proposed dynamic event-triggered algorithm \eqref{equ7}.}
	\vspace{-0.2cm}
	\label{fig2}
\end{figure}	
	\begin{figure}
	\centering
	\vspace{-0.2cm}
	\includegraphics[width=9.5cm]{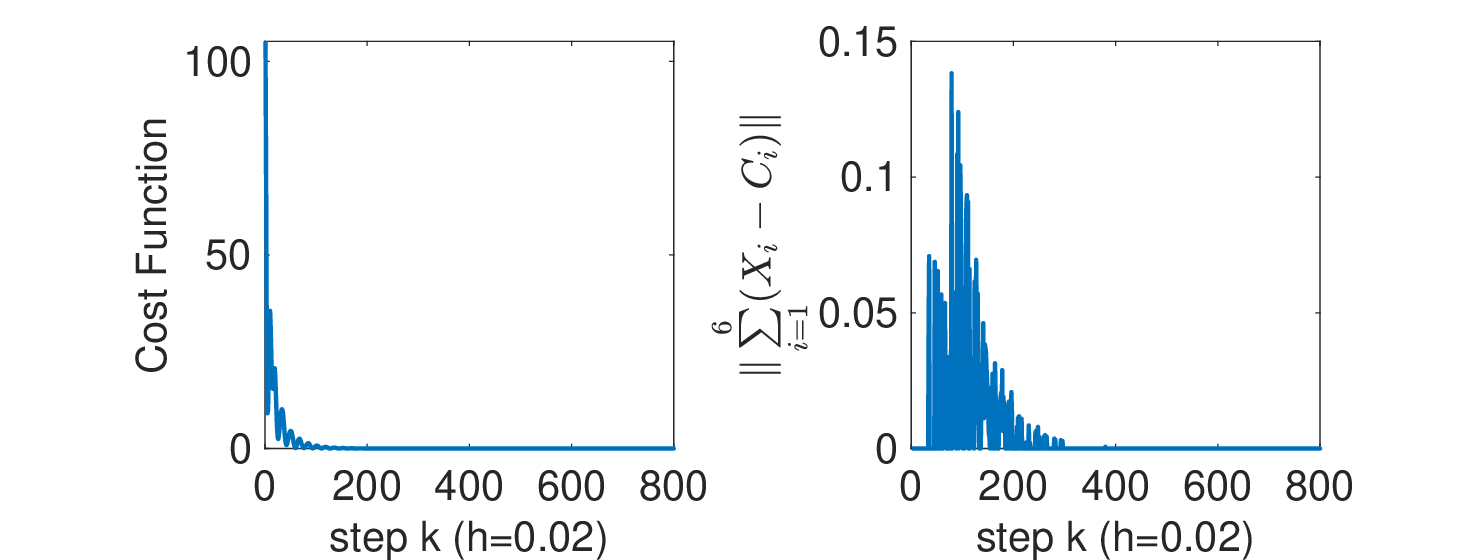}
	\vspace{-0.3cm}
	\caption{The cost function trajectory of resource allocation problem \eqref{equ17}.}
	\vspace{-0.2cm}
	\label{fig4}
\end{figure}
	
As shown in Fig.~\ref{fig2} and Fig.~\ref{fig4}, minimizing $H(X)$ guarantees that all agents asymptotically achieve consensus, which is consistent with the policies of their neighbors. For clear display, the right figure of Fig. \ref{fig2} randomly selects the evolution trajectories of the first three dimensional variables of three agents. Fig.~\ref{fig6} depicts the switching signal of the communication topology, where the network randomly alternates among three undirected connected graphs. Such switching enhances adaptability to dynamic environments, improves fault tolerance, and contributes to more robust resource allocation. Fig. \ref{dy} shows the evolution trajectory of the dynamic threshold in the event-triggered mechanism \eqref{equ3}. Combined with Fig. \ref{fig2},  it can be seen that the dynamic threshold evolves according to the error in estimating the gradient of the objective function before and after the iteration of the agent.

Furthermore, Fig.~\ref{fig5} illustrates that the communication among agents is triggered in a discontinuous manner, which aligns well with the theoretical analysis. This validates the proposed event-triggered strategy in effectively reducing communication frequency while preserving convergence performance. 

\begin{figure}[htbp]
	\centering
	\begin{minipage}{0.49\linewidth}
		\centering
		\includegraphics[width=1.05\linewidth]{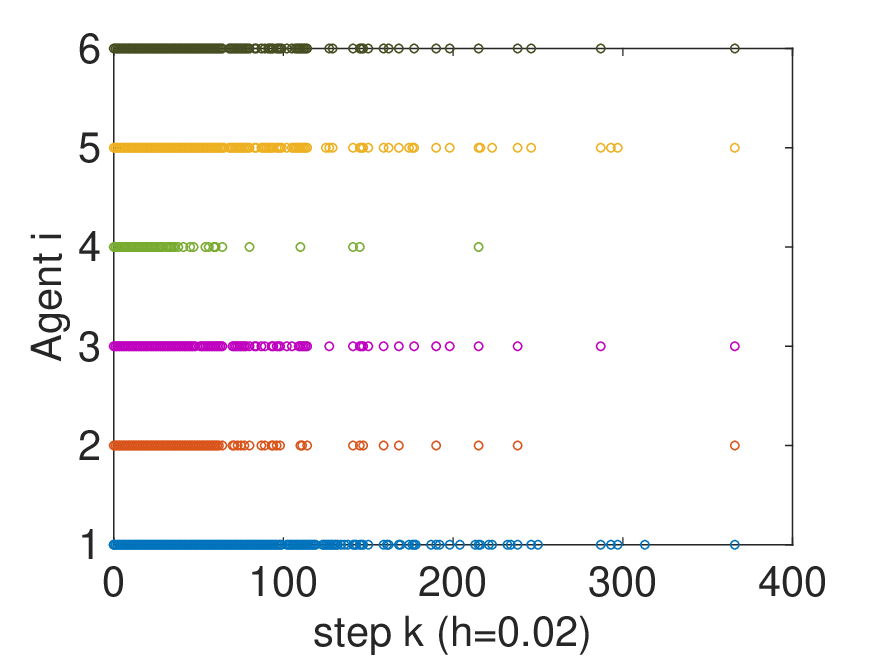}
\caption{Trigger time sequence for dynamic event-triggered mechanism \eqref{equ3}.}
\label{fig5}
	\end{minipage}
	\begin{minipage}{0.49\linewidth}
		\centering
		\includegraphics[width=1.1\linewidth]{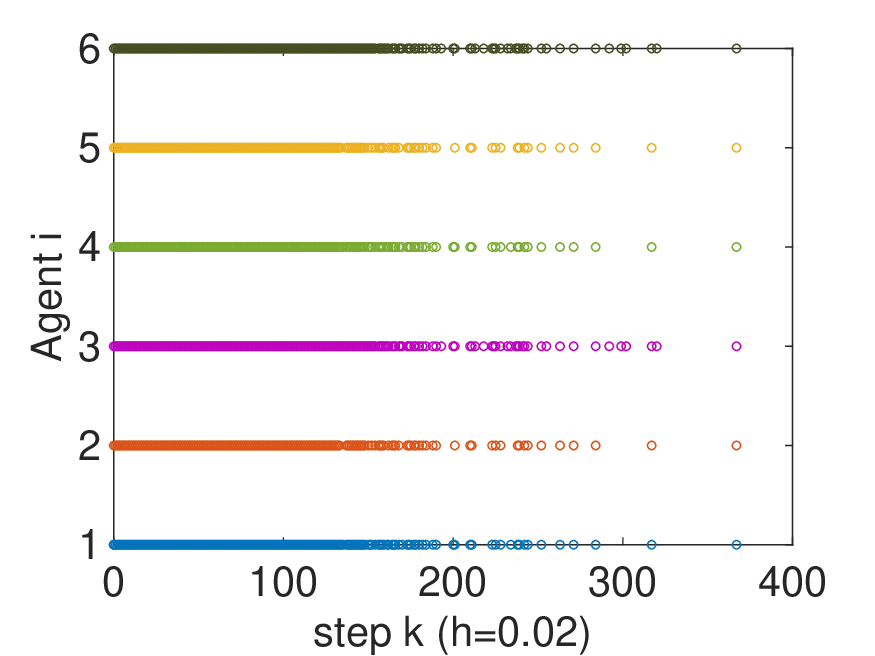}
	\caption{Trigger time sequence for static event-triggered mechanism \eqref{equ5}.}
		\label{chu_j}
	\end{minipage}
\end{figure}
	\begin{figure}[htbp]
	\centering
	\vspace{-0.3cm}
	\includegraphics[width=8.5cm]{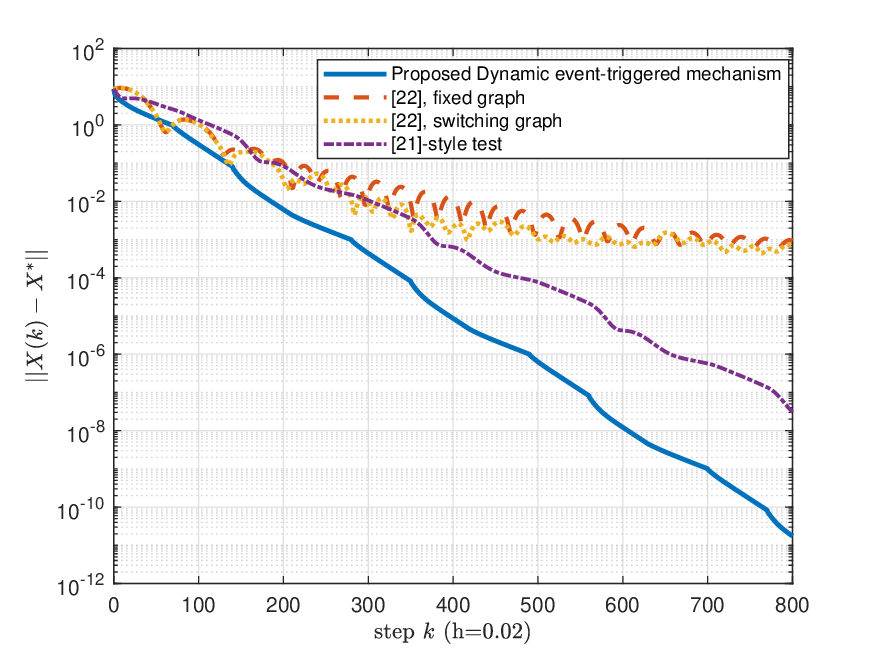}
	\vspace{-0.3cm}
	\caption{Comparison of the optimality error $\|X(k)-X^*\|$.}
	\vspace{-0.2cm}
	\label{chu_bi}
\end{figure}


	To validate the effectiveness of the proposed dynamic event-triggered mechanism, we compare the communication frequency under three strategies, as shown in Figs. \ref{fig5}–\ref{chu_bi}.
	
	Simulation results indicate that, to achieve the same convergence accuracy, the system requires 1168 with a static mechanism, and only 658 using the proposed dynamic mechanism. This highlights the superior communication efficiency of the proposed method, which significantly reduces communication load while maintaining system performance. Fig.~\ref{chu_bi} compares the optimality error $\|X(k)-X^\star\|$ among the proposed dynamic event-triggered mechanism, the event-triggered primal-dual algorithm \cite{HUANG2024111877} under its fixed-graph setting, the algorithm \cite{HUANG2024111877} directly applied to switching topologies, and algorithm \cite{9115622} illustrative test. The proposed mechanism achieves the fastest convergence under switching graphs. The \cite{HUANG2024111877}-based methods converge more slowly, especially when applied to switching graphs. The \cite{9115622}-style curve converges slower and is included only for illustration.
	
	 In summary, it can be concluded that the states of  all agents achieve convergence. The output of the algorithm \eqref{equ2} is consistent with the results of the theoretical analysis. This demonstrates that the event-triggered algorithm presented in this paper successfully minimizes the cost function while satisfying the global coupling equality constraint, and significantly reduces the unnecessary communication cost between agents.  
	 	
\section{Conclusion}\label{Sec6}
	This paper investigated a distributed resource allocation problem with smooth general convex local objective functions. A discrete-time residual-aware dynamic event-triggered algorithm was proposed under switching undirected communication graphs. By incorporating both local gradient-estimation errors and gradient-disagreement residuals into the triggering rule, the proposed mechanism reduces unnecessary communication while preserving a summable error bound for convergence analysis. Unlike existing event-triggered resource allocation methods relying on strong convexity, convergence was established under general convexity through a tailored Lyapunov analysis. A linear convergence result was further obtained under strong convexity. Numerical simulations and comparative tests verified the effectiveness and communication efficiency of the proposed method. Future work will consider nonconvex resource allocation, random topology switching, and security-aware communication constraints.

\bibliographystyle{IEEEtran}
\bibliography{ref,reference}

\begin{thebibliography}{10}
\providecommand{\url}[1]{#1}
\csname url@samestyle\endcsname
\providecommand{\newblock}{\relax}
\providecommand{\bibinfo}[2]{#2}
\providecommand{\BIBentrySTDinterwordspacing}{\spaceskip=0pt\relax}
\providecommand{\BIBentryALTinterwordstretchfactor}{4}
\providecommand{\BIBentryALTinterwordspacing}{\spaceskip=\fontdimen2\font plus
\BIBentryALTinterwordstretchfactor\fontdimen3\font minus
  \fontdimen4\font\relax}
\providecommand{\BIBforeignlanguage}[2]{{%
\expandafter\ifx\csname l@#1\endcsname\relax
\typeout{** WARNING: IEEEtran.bst: No hyphenation pattern has been}%
\typeout{** loaded for the language `#1'. Using the pattern for}%
\typeout{** the default language instead.}%
\else
\language=\csname l@#1\endcsname
\fi
#2}}
\providecommand{\BIBdecl}{\relax}
\BIBdecl

\bibitem{10791871}
S.~Jiang and Z.~Ding, ``Distributed optimal resource allocation control for
  heterogeneous linear multiagent systems,'' \emph{IEEE Transactions on
  Automatic Control}, vol.~70, no.~5, pp. 3378--3385, 2025.

\bibitem{10971901}
S.~S. Alaviani, A.~G. Kelkar, and U.~Vaidya, ``A fully parallel distributed
  algorithm for nonsmooth convex optimization with coupled constraints:
  Applications to distributed consensus-based optimization and distributed
  resource allocation,'' \emph{IEEE Transactions on Automatic Control},
  vol.~70, no.~10, pp. 6877--6884, 2025.

\bibitem{10149180}
L.~Luan and S.~Qin, ``Adaptive neurodynamic approach to multiple constrained
  distributed resource allocation,'' \emph{IEEE Transactions on Neural Networks
  and Learning Systems}, vol.~35, no.~10, pp. 13\,461--13\,471, 2024.

\bibitem{le2017collective}
X.~Le, Z.~Yan, and J.~Xi, ``A collective neurodynamic system for distributed
  optimization with applications in model predictive control,'' \emph{IEEE
  Transactions on Emerging Topics in Computational Intelligence}, vol.~1,
  no.~4, pp. 305--314, 2017.

\bibitem{jin2016distributed}
L.~Jin and S.~Li, ``Distributed task allocation of multiple robots: A control
  perspective,'' \emph{IEEE Transactions on Systems, Man, and Cybernetics:
  Systems}, vol.~48, no.~5, pp. 693--701, 2016.

\bibitem{9115815}
Z.~Guo and G.~Chen, ``Predefined-time distributed optimal allocation of
  resources: A time-base generator scheme,'' \emph{IEEE Transactions on
  Systems, Man, and Cybernetics: Systems}, vol.~52, no.~1, pp. 438--447, 2022.

\bibitem{li2024smoothing}
H.~Li, L.~Luan, and S.~Qin, ``A smoothing approximation-based adaptive
  neurodynamic approach for nonsmooth resource allocation problem,''
  \emph{Neural Networks}, vol. 179, p. 106625, 2024.

\bibitem{cicirelli2020analysis}
F.~Cicirelli, A.~Giordano, and C.~Mastroianni, ``Analysis of global and local
  synchronization in parallel computing,'' \emph{IEEE Transactions on Parallel
  and Distributed Systems}, vol.~32, no.~5, pp. 988--1000, 2020.

\bibitem{8715380}
C.~Liu, H.~Li, Y.~Shi, and D.~Xu, ``Distributed event-triggered gradient method
  for constrained convex minimization,'' \emph{IEEE Transactions on Automatic
  Control}, vol.~65, no.~2, pp. 778--785, 2020.

\bibitem{Liu2024}
M.~Liu, X.~Li, and K.~Wang, ``Neural network predictive control of converter
  inlet temperature based on event-triggered mechanism in flue gas acid
  production,'' \emph{Optimal Control Applications \& Methods}, vol.~45, no.~4,
  pp. 1815--1831, 2024.

\bibitem{10645219}
Z.~Dong, Y.~Jin, S.~Mao, W.~Ren, W.~Du, and Y.~Tang, ``Distributed optimization
  with asynchronous computation and event-triggered communication,'' \emph{IEEE
  Transactions on Automatic Control}, vol.~70, no.~2, pp. 1084--1099, 2025.

\bibitem{10713901}
X.~Chen, W.~Huo, Y.~Wu, S.~Dey, and L.~Shi, ``An efficient distributed nash
  equilibrium seeking with compressed and event-triggered communication,''
  \emph{IEEE Transactions on Automatic Control}, vol.~70, no.~3, pp.
  2035--2042, 2025.

\bibitem{10752430}
H.~Li, G.~Li, and S.~Qin, ``A distributed event-triggered neurodynamic approach
  for {L}yapunov matrix equation,'' \emph{IEEE Transactions on Systems, Man,
  and Cybernetics: Systems}, vol.~55, no.~1, pp. 563--572, 2025.

\bibitem{9928208}
Z.~Dong, S.~Mao, M.~Perc, W.~Du, and Y.~Tang, ``A distributed dynamic
  event-triggered algorithm with linear convergence rate for the economic
  dispatch problem,'' \emph{IEEE Transactions on Network Science and
  Engineering}, vol.~10, no.~1, pp. 500--513, 2023.

\bibitem{Wang2020}
S.~Wang, Y.~Cao, T.~Huang, Y.~Chen, P.~Li, and S.~Wen, ``Sliding mode control
  of neural networks via continuous or periodic sampling event-triggering
  algorithm,'' \emph{Neural Networks}, vol. 121, pp. 140--147, 2020.

\bibitem{Dai2020}
M.-Z. Dai and F.~Xiao, ``Event- and self-triggered consensus for
  double-integrator networks with relative state measurements,''
  \emph{International Journal of Control}, vol.~93, no.~5, pp. 1194--1203,
  2020.

\bibitem{Zhang2021}
Z.~Zhang, J.~Lunze, Y.~Sun, and Z.~Lu, ``Dynamic event-triggered communication
  based distributed optimization,'' \emph{International Journal of Robust and
  Nonlinear control}, vol.~31, no.~17, pp. 8504--8522, 2021.

\bibitem{Chen2024}
S.~Chen, H.~Jiang, and Z.~Yu, ``Distributed predefined-time optimization
  algorithm: Dynamic event-triggered control,'' \emph{IEEE Transactions on
  Control of Network Systems}, vol.~11, no.~1, pp. 486--497, 2024.

\bibitem{GUO2022110390}
Z.~Guo and G.~Chen, ``Distributed dynamic event-triggered and practical
  predefined-time resource allocation in cyber–physical systems,''
  \emph{Automatica}, vol. 142, p. 110390, 2022.

\bibitem{9115622}
M.~Li, L.~Su, and T.~Liu, ``Distributed optimization with event-triggered
  communication via input feedforward passivity,'' \emph{IEEE Control Systems
  Letters}, vol.~5, no.~1, pp. 283--288, 2021.

\bibitem{HUANG2024111877}
Y.~Huang, X.~Zeng, J.~Sun, and Z.~Meng, ``Distributed event-triggered algorithm
  for convex optimization with coupled constraints,'' \emph{Automatica}, vol.
  170, p. 111877, 2024.

\bibitem{cui2023resilient}
Y.~Cui, B.~Luo, Z.~Feng, T.~Huang, and X.~Gong, ``Resilient state containment
  of multi-agent systems against composite attacks via output feedback: A
  sampled-based event-triggered hierarchical approach,'' \emph{Information
  Sciences}, vol. 629, pp. 77--95, 2023.

\bibitem{10473140}
M.~Ye, L.~Ding, S.~Xu, and J.~Shi, ``A new regularized consensus perspective
  for distributed optimization,'' \emph{IEEE Transactions on Automatic
  Control}, vol.~69, no.~9, pp. 6301--6308, 2024.

\bibitem{Li2024}
R.~Li and G.-H. Yang, ``Distributed event-triggered algorithm designs for
  resource allocation problems via a universal scalar function-based
  analysis,'' \emph{IEEE Transactions on Cybernetics}, vol.~54, no.~4, pp.
  2224--2234, 2024.

\bibitem{10938121}
Q.~Meng, A.~Kasis, and M.~M. Polycarpou, ``Sliding mode control for robustness
  in networked switched systems under denial-of-service attacks,'' \emph{IEEE
  Transactions on Automatic Control}, vol.~70, no.~9, pp. 6021--6035, 2025.

\bibitem{wang2019distributed}
J.~Wang, H.~Li, and Z.~Wang, ``Distributed event-triggered scheme for economic
  dispatch in power systems with uncoordinated step-sizes,'' \emph{IET
  Generation, Transmission \& Distribution}, vol.~13, no.~16, pp. 3612--3622,
  2019.

\bibitem{nedic2018improved}
A.~Nedi{\'c}, A.~Olshevsky, and W.~Shi, ``Improved convergence rates for
  distributed resource allocation,'' in \emph{2018 IEEE Conference on Decision
  and Control (CDC)}.\hskip 1em plus 0.5em minus 0.4em\relax IEEE, 2018, pp.
  172--177.

\bibitem{YI2016259}
P.~Yi, Y.~Hong, and F.~Liu, ``Initialization-free distributed algorithms for
  optimal resource allocation with feasibility constraints and application to
  economic dispatch of power systems,'' \emph{Automatica}, vol.~74, pp.
  259--269, 2016.

\bibitem{7915716}
C.~Li, X.~Yu, T.~Huang, and X.~He, ``Distributed optimal consensus over
  resource allocation network and its application to dynamical economic
  dispatch,'' \emph{IEEE Transactions on Neural Networks and Learning Systems},
  vol.~29, no.~6, pp. 2407--2418, 2018.

\bibitem{8070456}
Z.~Deng, S.~Liang, and Y.~Hong, ``Distributed continuous-time algorithms for
  resource allocation problems over weight-balanced digraphs,'' \emph{IEEE
  Transactions on Cybernetics}, vol.~48, no.~11, pp. 3116--3125, 2018.

\bibitem{zareh2018distributed}
M.~Zareh, L.~Sabattini, and C.~Secchi, ``Distributed {L}aplacian eigenvalue and
  eigenvector estimation in multi-robot systems,'' in \emph{Distributed
  Autonomous Robotic Systems: The 13th International Symposium}.\hskip 1em plus
  0.5em minus 0.4em\relax Springer, 2018, pp. 191--204.

\bibitem{8709779}
X.~Li, L.~Xie, and Y.~Hong, ``Distributed continuous-time nonsmooth convex
  optimization with coupled inequality constraints,'' \emph{IEEE Transactions
  on Control of Network Systems}, vol.~7, no.~1, pp. 74--84, 2020.

\bibitem{5482198}
T.~Li, M.~Fu, L.~Xie, and J.-F. Zhang, ``Distributed consensus with limited
  communication data rate,'' \emph{IEEE Transactions on Automatic Control},
  vol.~56, no.~2, pp. 279--292, 2011.

\bibitem{6544599}
S.~Bolognani and S.~Zampieri, ``A distributed control strategy for reactive
  power compensation in smart microgrids,'' \emph{IEEE Transactions on
  Automatic Control}, vol.~58, no.~11, pp. 2818--2833, 2013.

\end{thebibliography}

\end{document}